\def\thesection{\arabic{section}}
\def\theequation{\thesection.\arabic{equation}}
\newcommand{\ds} {\displaystyle}
\newcommand{\e}{\epsilon}
\newcommand{\al} {\alpha}
\newcommand{\ba} {\beta}
\newcommand{\ga} {\gamma}
\newcommand{\Ga} {\Gamma}
\newcommand{\Om} {\Omega}
\newcommand{\ra} {\rightarrow}
\newcommand{\de} {\delta}
\newcommand{\De} {\Delta}
\newcommand{\la} {\lambda}
\newcommand{\noi} {\noindent}
\newcommand{\mb} {\mathbb}
\newcommand{\mc} {\mathcal}
\def\theequation{\@arabic{\c@section}.\@arabic{\c@equation}}
\def\QED{\hfill {$\square$}\goodbreak \medskip}
\newtheorem{Theorem}{Theorem}[section]
\newtheorem{Lemma}[Theorem]{Lemma}
\newtheorem{Proposition}[Theorem]{Proposition}
\newtheorem{Corollary}[Theorem]{Corollary}
\newtheorem{Remark}[Theorem]{Remark}
\newtheorem{Definition}[Theorem]{Definition}
\begin{document}
{\vspace{0.01in}
\title[Unbalanced fractional elliptic problems]
{\small Unbalanced fractional elliptic problems  with exponential nonlinearity: subcritical and critical cases}

\author[D. Kumar]{Deepak Kumar}
\address[D. Kumar]{Department of Mathematics, Indian Institute of Technology Delhi,
	Hauz Khaz, New Delhi-110016, India}
\email{\tt deepak.kr0894@gmail.com}

\author[V.D. R\u{a}dulescu]{Vicen\c{t}iu D. R\u{a}dulescu}
\address[V.D. R\u{a}dulescu]{Faculty of Applied Mathematics, AGH University of Science and Technology, al. Mickiewicza 30, 30-059 Krak\'ow, Poland \& Department of Mathematics, University of Craiova, 200585 Craiova, Romania}
\email{\tt radulescu@inf.ucv.ro}

\author[K. Sreenadh]{K. Sreenadh}
\address[K. Sreenadh]{Department of Mathematics, Indian Institute of Technology Delhi,
	Hauz Khaz, New Delhi-110016, India}
\email{\tt sreenadh@gmail.com}


\date{}

\keywords{Nonlocal operators, fractional $(p,q)$-equation, singular exponential  nonlinearity, Schwarz symmetrization, Moser-Trudinger inequality.\\
\phantom{aa} 2010 Mathematics Subject Classification: 35J35, 35J60, 35R11}

\begin{abstract}
This paper deals with the qualitative analysis of solutions
 to the following $(p,q)$-fractional equation:
\begin{equation*}
\begin{array}{rllll}
(-\Delta)^{s_1}_{p}u+(-\Delta)^{s_2}_{q}u+V(x) \big(|u|^{p-2}u+|u|^{q-2}u\big)  = K(x)\frac{f(u)}{|x|^\ba} \;  \text{ in } \mb R^N,
\end{array}
\end{equation*}
\noi where $1< q< p$, $0<s_2\leq s_1<1$, $ps_1=N$, $\ba\in[0,N)$, and $V,K:\mb R^N\to\mb R$, $f:\mb R\to \mb R$ are continuous functions satisfying some natural hypotheses. We 
are concerned both with the case when $f$ has a subcritical growth and with the critical framework with respect to the exponential nonlinearity. By combining a
Moser-Trudinger
type inequality for fractional Sobolev spaces with Schwarz symmetrization techniques and related variational methods, we
prove the existence of  nonnegative solutions.
\end{abstract}

\maketitle


\section {Introduction}
\noindent In this paper, we are concerned with the study of a nonlinear nonlocal problem whose features are the following: (a) the presence of several differential operators with different growth, which generates a {\it double phase} associated energy; (b) the reaction combines the multiple effects generated by a Hardy singular potential and a term with subcritical or critical growth with respect to the exponential nonlinearity; (c) due to the unboundedness of the domain, Cerami sequences do not have the compactness property;  (d)
we overcome the lack of compactness by exploiting the special properties of the associated potential; (e) the proofs combine refined techniques, including a Moser-Trudinger type inequality for fractional
Sobolev spaces and  Schwarz symmetrization tools. 
 Summarizing, this paper
 is concerned with the refined qualitative and bifurcation analysis of solutions for a class of {\it singular} nonlocal problems driven by differential operators with {\it unbalanced growth}. The arguments cover both the subcritical and critical cases.
 
  We recall in what follows some of the outstanding contributions of the Italian school  to the study of unbalanced integral functionals and double phase problems.  We first refer to the pioneering contributions of Marcellini \cite{marce1,marce2,marce3} who studied lower semicontinuity and regularity properties of minimizers of certain quasiconvex integrals. Problems of this type arise in nonlinear elasticity and are connected with the deformation of an elastic body, cf. Ball \cite{ball1,ball2}. We also refer to Fusco and Sbordone \cite{fusco} for the study of regularity of minima of anisotropic integrals.

In order to recall the roots of double phase problems, let us assume that $\Omega$ is a bounded domain in ${\mathbb R}^n$ ($N\geq 2$) with smooth boundary. If $u:\Omega\to{\mathbb R}^n$ is the displacement and if $Du$ is the $n\times n$  matrix of the deformation gradient, then the total energy can be represented by an integral of the type
\begin{equation}\label{paolo}I(u)=\int_{\Omega} f(x,Du(x))dx,\end{equation}
where the energy function $f=f(x,\xi):\Omega\times{\mathbb R}^{n\times n}\to{\mathbb R}$ is quasiconvex with respect to $\xi$. One of the simplest examples considered by Ball is given by functions $f$ of the type
$$f(\xi)=g(\xi)+h({\rm det}\,\xi),$$
where ${\rm det}\,\xi$ is the determinant of the $n\times n$ matrix $\xi$, and $g$, $h$ are nonnegative convex functions, which satisfy the growth conditions
$$g(\xi)\geq c_1\,|\xi|^p;\quad\lim_{t\to+\infty}h(t)=+\infty,$$
where $c_1$ is a positive constant and $1<p<n$. The condition $p< n$ is necessary to study the existence of equilibrium solutions with cavities, that is, minima of the integral \eqref{paolo} that are discontinuous at one point where a cavity forms; in fact, every $u$ with finite energy belongs to the Sobolev space $W^{1,p}(\Omega,{\mathbb R}^n)$, and thus it is a continuous function if $p>n$. In accordance with these problems arising in nonlinear elasticity, Marcellini \cite{marce1,marce2} considered continuous functions $f=f(x,u)$ with {\it unbalanced growth} that satisfy
$$c_1\,|u|^p\leq |f(x,u)|\leq c_2\,(1+|u|^q)\quad\mbox{for all}\ (x,u)\in\Omega\times{\mathbb R},$$
where $c_1$, $c_2$ are positive constants and $1\leq p\leq q$. Regularity and existence of solutions of elliptic equations with $p,q$--growth conditions were studied in \cite{marce2}.

The study of non-autonomous functionals characterized by the fact that the energy density changes its ellipticity and growth properties according to the point has been continued in a series of remarkable papers by Mingione {\it et al.} \cite{baroni1, baroni, beck, colombo0, colombo1}. These contributions are in relationship with the works of Zhikov \cite{zhikov1}, in order to describe the
behavior of phenomena arising in nonlinear
elasticity.
In fact, Zhikov intended to provide models for strongly anisotropic materials in the context of homogenisation.
 In particular, Zhikov considered the following model of
functional  in relationship to the Lavrentiev phenomenon:
$$
{\mathcal P}_{p,q}(u) :=\int_{\Omega} (|\nabla u|^p+a(x)|\nabla u|^q)dx,\quad 0\leq a(x)\leq L,\ 1<p<q.
$$
In this functional, the modulating coefficient $a(x)$ dictates the geometry of the composite made by
two differential materials, with hardening exponents $p$ and $q$, respectively.

The functional ${\mathcal P}_{p,q}$ falls in the realm of the so-called functionals with
nonstandard growth conditions of $(p, q)$--type, according to Marcellini's terminology. This is a functional of the type in \eqref{paolo}, where the energy density satisfies
$$|\xi|^p\leq f(x,\xi)\leq  |\xi|^q+1,\quad 1\leq p\leq q.$$

Another significant model example of a functional with $(p,q)$--growth studied by Mingione {\it et al.} is given by
$$u\mapsto \int_{\Omega} |\nabla u|^p\log (1+|\nabla u|)dx,\quad p\geq 1,$$
which is a logarithmic perturbation of the $p$-Dirichlet energy.

Recent contributions to the study of double phase problems can be found in \cite{alves, bahrouni, kumar, papa1, papa2, papa3} (local case) and \cite{ambrosio, goel, zhang0} (nonlocal case).

\section{Statement of the problem and abstract setting}
In this paper, we are concerned with the existence of solutions for the following singular $(p,q)$-fractional equation:
\begin{equation*}
  \noindent(\mc P)\qquad \begin{array}{rllll}
  (-\Delta)^{s_1}_{p}u+(-\Delta)^{s_2}_{q}u+V(x) \big(|u|^{p-2}u+|u|^{q-2}u\big)  = K(x)\frac{f(u)}{|x|^\ba} \;  \text{ in } \mb R^N,
  \end{array}
 \end{equation*}
\noi where $1< q< p$, $0<s_2\leq s_1<1$, $2\leq N=ps_1$, $\ba\in[0,N)$, and $V,K:\mb R^N\to\mb R$, $f:\mb R\to \mb R$ are continuous functions  satisfying some natural assumptions. Let $(-\Delta)^{s}_{p}$ denote the fractional $p$-Laplace operator defined as
\begin{equation*}
{(-\Delta)^{s}_pu(x)}= 2\ds\lim_{\e\ra 0}\int_{\mb R^N\setminus B_\e(x)} \frac{|u(x)-u(y)|^{p-2}(u(y)-u(x))}{|x-y|^{N+ps}}dy.
\end{equation*}
Problems involving the fractional Laplacian, as in $(\mc P)$, arise from a wide range of real world applications such as optimization, phase transition, anomalous diffusion, image processing, soft thin films, conservation laws and water waves, for a list of more bibliography and other details on this topic we refer to \cite{nezzaH}.  The main motivation to study problems with leading operators given in $(\mc P)$ comes when $s_1=s_2=1$, which is the local case. Here the leading operator,  known as $(p,q)$-Laplacian,  arises while studying the stationary solutions of general reaction-diffusion equation
\begin{equation}\label{prb1}
u_t= \mathrm{div} [A(u)\nabla u]+ r(x,u),
\end{equation}
where $A(u)= |\nabla u|^{p-2}+|\nabla u|^{q-2}$. 

Problem \eqref{prb1} has  applications to biophysics, plasma physics and chemical reactions, where $u$ corresponds to the concentration term, the first term on the right-hand side represents diffusion with a diffusion coefficient $A(u)$ and the second term is the reaction, which relates to sources and loss processes. For more details,  readers are referred to \cite{marano}.

In the local case, that is, when $s_1=s_2=1$, problem $(\mc P)$ is motivated by the famous Moser-Trudinger inequality. This comes into the picture because of the fact that $W^{1,N}(\mb R^N)$ is embedded into $L^p(\mb R^N)$ for all $N\le p<\infty$ but not in $L^\infty(\Om)$, hence in this case the critical nonlinearity is considered to have exponential type growth condition. These kinds of problems were studied by several authors, see for instance, \cite{adi,desoz,giaco}.
As far as problems with singular exponential nonlinearity is concerned, Adimurthi and Sandeep \cite{adisand}  proved that the embedding $W^{1,N}_0(\Om)\ni u\mapsto |x|^{-\ba} e^{\al |u|^{N/(N-1)}} \in L^1(\Om)$ is compact if $\frac{\al}{\al_N}+\frac{\ba}{N}<1$ and is continuous if  $\frac{\al}{\al_N}+\frac{\ba}{N}=1$. Using this result they studied problems having singular exponential type nonlinearity in a bounded domain. In the case of $\mb R^N$, Adimuthi and Yang \cite{adiyan} considered the following singular problem
\begin{align*}
	-\Delta_N u+V(x)|u|^{N-2}u = \frac{f(x,u)}{|x|^\ba}+\e h(x) \;  \text{ in } \mb R^N,
\end{align*}
where among other assumptions, $f$ has exponential growth condition and $h$ is in the dual of $W^{1,N}(\mb R^N)$. Here authors established singular Moser-Trudinger type inequality for whole $\mb R^N$ and obtained the existence result for a mountain pass solution when $\e>0$ is small. Subsequently, Yang \cite{yang} and Goyal and Sreenadh \cite{goyal} studied similar singular problems in the whole of $\mb R^N$. In the latter work, authors proved the existence and multiplicity results using Nehari manifold method. 

Regarding the problems involving operators with unbalanced growth conditions, we mention the recent work of Figueiredo and Nunes \cite{figu}. Using the method of Nehari manifold authors proved the existence of a solution for $(N,p)$ type equations in bounded domains. In \cite{fiscel}, Fiscella and Pucci studied the following $(N,p)$ equation:
\begin{align*}
-\De_p u-\De_N u+|u|^{p-2}u +|u|^{N-2}u = \la h(x)u_+^{q-1} +\ga f(x,u) \; \mbox{ in }\mb R^N,
\end{align*}
where $1<q,p<N<\infty$, $N\ge 2$, $h(x)\ge 0$, $\la,\ga>0$ are parameters and the function $f$ has exponential type growth condition. In this work authors proved the existence of multiple solutions for small $\la>0$ and large $\ga$.

In the nonlocal setting, we mention the work of Giacomoni {\it et al.} \cite{giaco1}. Here, the authors proved existence of multiple solutions using Nehari manifold for the $1/2$-Laplacian problem in a bounded domain of $\mb R$.
Subsequently, Zhang \cite{zhang} established Moser-Trudinger type inequality in fractional Sobolev-Slobodeckij spaces $W^{s,p}(\mb R^N)$ and proved existence and multiplicity of solutions for the following fractional Laplacian equation
\begin{align*}
 (-\Delta)_p^s u+V(x)|u|^{p-2}u = f(x,u)+\e h(x) \;  \text{ in } \mb R^N,
\end{align*}
 when $\e>0$ is sufficiently small.
Recently, Mingqi {\it et al.} \cite{ming} and Xiang {\it et al.} \cite{xiang} studied fractional Kirchhoff problems with exponential nonlinearity in bounded domain and in $\mb R^N$, respectively.

Problems of the type $(\mc P)$ involving potential $K$ and exponential type nonlinearity have been studied by do \'O {\it et al.} \cite{doMySq} for the case $N=1$ and $s=1/2$. In this work, authors considered the following problem:
\begin{align*}
	(-\Delta)^{1/2} u+ u = K(x)g(u) \;  \text{ in } \mb R.
\end{align*}
Under certain conditions on $K$, authors proved compactness results, which was absent due to unboundedness of the domain, and obtained existence of a nontrivial nonnegative solution in the cases when $g$ possesses subcritical or critical growth condition. Subsequently, this work was generalized by Miyagaki and Pucci \cite{miyPu} for Kirchoff problem in $1$-dimension.

\section{Main results: subcritical and critical cases}
The main purpose in the present paper is to obtain the existence of nontrivial nonnegative solutions to $(\mc P)$ under the following assumptions on  $V,K:\mb R^N\to \mb R$.
  \begin{enumerate}
 	\item[(i)] The function $V$ is continuous and there exists a constant $V_0>0$ such that $V(x)\ge V_0>0$ for all $x\in\mb R^N$. 
 	\item[(ii)] The function $K\in C(\mb R^N) \cap L^\infty(\mb R^N)$ and is positive in $\mb R^N$.
 	\item[(iii)] For any sequence $\{A_n \}$ of measurable sets of $\mb R^N$ with $|A_n|\le R$, for all $n\in\mb N$ and some $R>0$, the following holds 
 	\begin{align}\label{eqK}
 	\lim_{r\ra\infty} \int_{A_n\cap B_r^c(0)} K(x)dx =0 \quad \mbox{uniformly w.r.t. } n\in\mb N.
 	\end{align}
 \end{enumerate}
To define the natural space which contains all the solutions of problem $(\mc P)$, we first recall the notion of following spaces. For $1<p<\infty$ and $0<s<1$, the fractional Sobolev space is defined as
 \begin{align*}
      W^{s,p}(\mb R^N)= \left\lbrace u \in L^p(\mb R^N): [u]_{s,p}<+ \infty \right\rbrace
 \end{align*}
 \noi endowed with the norm  $\|u\|_{W^{s,p}(\mb R^N)}=  \|u\|_{L^p(\mb R^N)}+ [u]_{s,p}$, where
 \begin{align*}
 [u]_{s,p}^p =\int_{\mb R^N}\int_{\mb R^N} \frac{|u(x)-u(y)|^p}{|x-y|^{N+sp}}dxdy.
 \end{align*}
Let $\widetilde{W}_V^{s_1,p}(\mb R^N)$ be the space defined as
\[ \widetilde{W}_V^{s_1,p}(\mb R^N):= \bigg\{ u\in W^{s_1,p}(\mb R^N) : \int_{\mb R^N} V(x)|u(x)|^p dx<\infty \bigg\}, \]
which is a reflexive Banach space when endowed with the norm
 \begin{align*}
 	\| u\|_{s_1,p} = \left( [u]_{s_1,p}^p+ \int_{\mb R^N} V(x)|u(x)|^p dx \right)^{1/p}
 \end{align*}
and analogously we define $\widetilde{W}_V^{s_2,q}(\mb R^N)$. From \cite{nezzaH,pucci}, we have the following continuous embedding result
 \begin{align}\label{emd}
 \widetilde{W}_V^{s_1,p}(\mb R^N)\hookrightarrow W^{s_1,p}(\mb R^N)\hookrightarrow L^m(\mb R^N), \; \mbox{ for all }m\ge p.
 \end{align}
Let $X:= \widetilde{W}_V^{s_1,p}(\mb R^N) \cap \widetilde{W}_V^{s_2,q}(\mb R^N)$ endowed with the norm
 \[ \|u\|:= \|u\|_{s_1,p}+ \|u\|_{s_2,q}.   \]
In order to deal with problem $(\mc P)$, we prove the following singular version of Moser-Trudinger type inequality for fractional Sobolev spaces in whole $\mb R^N$. For this we first obtain similar inequality for bounded domains much in the spirit of Adimurthi-Sandeep \cite[Theorem 2.1]{adisand}. Then, using Schwarz symmetrization technique we prove our theorem. For convenience, we denote
  \begin{align*}
  \Phi_\al(t) =e^{\al |t|^\frac{N}{N-s}} -\sum_{\substack{0\le j<N/s-1 \\ j\in\mb N} } \frac{\al^j}{j!} |t|^{j\frac{N}{N-s}}, \; \; \mbox{for }t\in\mb R.
   \end{align*}

We state as follows our first result.
 \begin{Theorem}\label{thm1}
 	Let $N\ge 2$, $s\in(0,1)$ and $p=N/s$. For all $\al>0$, $\ba\in[0,N)$ and $u\in W^{s,p}(\mb R^N)$, the following holds
 	 \begin{align*}
 	   \int_{\mb R^N} \frac{\Phi_\al(u)}{|x|^\ba}dx<\infty.
 	 \end{align*}
 	Furthermore, for all $\al< \big(1-\ba/N \big) \al_{N,s}$ and $\tau>0$,
 	  \begin{align*}
 	    \sup\bigg\{ \int_{\mb R^N} \frac{\Phi_\al(u)}{|x|^\ba}dx : u\in W^{s,p}(\mb R^N), \|u\|_{s,p,\tau}\le 1 \bigg\}<\infty,
 	  \end{align*}
 	  where $\| u\|_{s,p,\tau}= \left( [u]_{s,p}^p+\tau \int_{\mb R^N} |u|^p \right)^{1/p}$ and $\al_{N,s}>0$, is defined in Section \ref{prelm.} (see Theorem \ref{thm3}).
 \end{Theorem}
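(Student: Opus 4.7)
The plan follows the two-stage strategy announced in the excerpt. First I would establish a bounded-domain weighted Moser--Trudinger inequality in the spirit of Adimurthi--Sandeep, and then reduce the whole-space statement to radially symmetric decreasing competitors via Schwarz symmetrization and a splitting argument. The unweighted fractional Moser--Trudinger inequality recalled in the introduction (Theorem~\ref{thm3}, which defines $\al_{N,s}$) is used throughout as a black box.

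For the bounded-domain version, given a bounded domain $\Om\subset\mb R^N$ containing the origin, $u\in W^{s,p}_0(\Om)$ with $[u]_{s,p}\le 1$, and $\al<(1-\ba/N)\al_{N,s}$, one can pick $r>1$ with $r\ba<N$ and $r'\al<\al_{N,s}$; these two constraints on the conjugate pair $(r,r')$ are simultaneously feasible precisely under the subcritical hypothesis on $\al$. H\"older's inequality then separates the weighted integral into the finite factor $\||x|^{-\ba}\|_{L^r(\Om)}$ times an unweighted exponential integral, uniformly bounded by the classical Moser--Trudinger inequality on $\Om$. Next, for the passage to $\mb R^N$, the Hardy--Littlewood rearrangement inequality applied to the pair $(\Phi_\al(|u|),|x|^{-\ba})$ gives $\int_{\mb R^N}\Phi_\al(u)|x|^{-\ba}\le\int_{\mb R^N}\Phi_\al(u^*)|x|^{-\ba}$, since $|x|^{-\ba}$ is its own symmetric decreasing rearrangement and $\Phi_\al$ is increasing in $|t|$. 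Combined with the fractional P\'olya--Szeg\"o inequality of Almgren--Lieb, which gives $[u^*]_{s,p}\le[u]_{s,p}$ and $\|u^*\|_p=\|u\|_p$, this reduces the second assertion to the radial decreasing case.

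For the uniform bound, take $u\ge 0$ radial decreasing with $\|u\|_{s,p,\tau}\le 1$ and split $\mb R^N=B_R\cup B_R^c$ for a large $R$ to be chosen. On $B_R^c$ the weight is at most $R^{-\ba}$ and the contribution is controlled by Theorem~\ref{thm3} on the whole space. On $B_R$ set $M:=u(R)$ and $w:=(u-M)_+\in W^{s,p}_0(B_R)$, which satisfies $[w]_{s,p}\le[u]_{s,p}\le 1$ by the pointwise inequality $|w(x)-w(y)|\le|u(x)-u(y)|$. The radial layer-cake bound $|B_R|M^p\le\|u\|_p^p$ gives $M\le CR^{-s}\to 0$ as $R\to\infty$. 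Writing $u=w+M$ on $B_R$ and expanding $|u|^{N/(N-s)}\le(1+\e)|w|^{N/(N-s)}+C_\e M^{N/(N-s)}$ with $\e$ chosen so that $(1+\e)\al<(1-\ba/N)\al_{N,s}$, the $B_R$ integral reduces to the bounded-domain estimate applied to $w$ at level $(1+\e)\al$, multiplied by the uniformly controlled factor $e^{C_\e M^{N/(N-s)}}$.

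The first (unrestricted-in-$\al$) assertion then follows by a further truncation: for fixed $u$ and arbitrary $\al>0$, set $u_M=(|u|-M)_+\,\mathrm{sgn}(u)$. By dominated convergence $\|u_M\|_{s,p,\tau}\to 0$ as $M\to\infty$, so for $M$ large enough the scalar $2^{N/(N-s)}\al\,\|u_M\|_{s,p,\tau}^{N/(N-s)}$ falls below $(1-\ba/N)\al_{N,s}$; applying the uniform bound to $u_M/\|u_M\|_{s,p,\tau}$ together with $|u|\le 2|u_M|$ on $\{|u|>2M\}$ gives finiteness on that set, while on $\{|u|\le 2M\}$ the tail estimate $\Phi_\al(u)\le C|u|^p$ (the series defining $\Phi_\al$ starts at an exponent $\ge p=N/s$) and the embedding~\eqref{emd} finish the job. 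The main obstacle I anticipate is the balancing in the preceding paragraph: the cross term $C_\e M^{N/(N-s)}$ must be neutralized by taking $R$ large, while the bounded-domain constant from the first step must remain finite on $B_R$; fortunately inspection of the H\"older argument shows that constant depends on $R$ only through $\||x|^{-\ba}\|_{L^r(B_R)}$, which stays bounded for fixed $r$ with $r\ba<N$, so the scheme closes.
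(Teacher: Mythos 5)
Your overall scheme --- Schwarz symmetrization to reduce to radial decreasing competitors, a bounded--domain weighted Moser--Trudinger inequality proved by H\"older (the Adimurthi--Sandeep device), a split at a radius $R$ with a shift $u\mapsto (u-u(R))_+$ on $B_R$, and the expansion of $|u|^{p'}$ via $(a+b)^{p'}\le(1+\e)a^{p'}+C_\e b^{p'}$ --- is exactly the skeleton of the paper's proof, and the feasibility computation for the H\"older pair $(r,r')$ matches the paper's Lemma \ref{MTbd}. However, two points need repair.

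First, the tail estimate on $B_R^c$ is not justified as stated: there is no ``Theorem~\ref{thm3} on the whole space.'' Theorem~\ref{thm3} is only a bounded-domain result, and invoking a whole-space unweighted fractional Moser--Trudinger inequality here is dangerously close to assuming the $\ba=0$ case of the statement you are proving. The paper avoids this entirely: since $u^*$ is radial decreasing, the radial decay bound $u^*(x)\le |x|^{-s}\big(N/\omega_{N-1}\big)^{1/p}\|u\|_p$ (Lemma \ref{rad}) lets one estimate the power series $\sum_{j\ge k_0}\tfrac{\al^j}{j!}\int_{|x|>R}|u^*|^{jp'}|x|^{-\ba}$ \emph{term by term} (with the bottom term $j=k_0$ treated separately when $k_0p'=p$), and the choice of $R$ --- namely $R^{-N}N/(\omega_{N-1}\tau)=1$ in the uniform case, or $R^{-N/(p-1)}(N/\omega_{N-1})^{1/(p-1)}\|u\|_p^{p'}=1$ in the pointwise case --- makes the resulting sum a convergent numerical series $\lesssim R^{N-\ba}\sum\al^j/j!$. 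This elementary series argument is the ingredient missing from your sketch and is precisely what makes the proof self-contained.

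Second, your closing ``balancing'' remark contains a false claim: $\||x|^{-\ba}\|_{L^r(B_R)}$ does \emph{not} stay bounded as $R\to\infty$; it grows like $R^{(N-r\ba)/r}$. This turns out to be harmless because the correct scheme does not send $R\to\infty$ at all: once $R$ is fixed by the normalization above, the boundary value satisfies $u^*(R)^{p'}\le R^{-N/(p-1)}(N/(\omega_{N-1}\tau))^{1/(p-1)}$, which is a fixed number, so $e^{C_\e u^*(R)^{p'}}$ is uniformly controlled without any large-$R$ limit. Your truncation argument for the first (fixed $u$, arbitrary $\al$) assertion is a legitimate alternative to the paper's route (which simply reruns the $R$-split with $\|u\|_p$-dependent $R$), but it still rests on the tail estimate, so the first point above must be fixed before it closes.
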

 The function $f$ is said to have subcritical growth condition with respect the exponential nonlinearity if it satisfies (f2) and the growth is critical if it satisfies (f2)$'$. Furthermore, we assume the following:
 \begin{enumerate}
 	\item[(f1)] The function $f:\mb R\to [0,\infty)$ is continuous with $f(t)=0$ for all $t\le 0$ and 
 	\[ \lim_{t\ra 0^+} \frac{f(t)}{t^{p-1}}=0. \]
 	\item[(f2)](Subcritical growth condition). For all $\al>0$, the following holds
 	\begin{align*}
 	\lim_{t\ra \infty} \frac{f(t)}{\Phi_\al(t)}=0.
 	\end{align*}
 	\item[(f3)] The map $t\mapsto t^{1-p}f(t)$ is nondecreasing in $(0, \infty)$ and $\ds\lim_{t\ra \infty} t^{-p}F(t)=\infty$, where $F(t)=\int_{0}^{t} f(\tau)d\tau$.
 \end{enumerate}
 For the critical growth condition, we assume $f$ satisfies the following conditions in addition to (f1). 
 \begin{enumerate}
 	\item[(f2)$'$](Critical growth). There exists $\al_0>0$ such that 
 	\begin{align*}
 	\lim_{t\ra \infty} \frac{f(t)}{\Phi_\al(t)}=0 \; \; \forall \al>\al_0 \; \mbox{ and} \quad 	\limsup_{t\ra \infty} \frac{f(t)}{\Phi_\al(t)}=\infty \; \; \forall \al<\al_0.
 	\end{align*}
 	\item[(f3)$'$] The map $t\mapsto t^{1-p}f(t)$ is nondecreasing in $(0,\infty)$ and there exists $\de>p$ such that $F(t)\ge C_\de t^\de$ for all $t\in\mb R^+$, for $C_\de>0$ sufficiently large (a lower bound is given in Lemma \ref{lem2}).
 	\item[(AR)] (Ambrosetti-Rabinowitz condition). There exists $\nu>p$ such that $\nu F(t)\le tf(t)$ for all $t\in\mb R^+$.
 \end{enumerate}
Due to the unbounded nature of the domain, Cerami sequences do not have the compactness property. We restore this compactness by exploiting the special  property of the potential $K$, namely \eqref{eqK} (see Lemma \ref{cmp}). This helps us to prove the strong convergence of Cerami sequences and hence to obtain nontrivial solutions. The existence of such sequences is obtained by using mountain pass lemma. In the subcritical case, we do not assume Ambrosetti-Rabinowitz type condition on $f$, which makes little difficult to prove boundedness of Cerami sequences. The non-homogeneous nature of the leading operator in $(\mc P)$ creates additional difficulty to prove boundedness of the Cerami sequence and its strong convergence result. Now, we state our main theorem as follows, which to the best of our knowledge, is new even for the case $\ba=0$. 
\begin{Theorem}\label{thm2}
	There exists a nonnegative nontrivial solution of problem $(\mc P)$ in the following cases
	\begin{enumerate}
	 \item[(i)] If (f1), (f2) and (f3) are satisfied.
	 \item[(ii)] If (f1), (f2)$'$, (f3)$'$ and (AR) are satisfied with $C_\de$, appearing in (f3)$'$, is sufficiently large.
	\end{enumerate}
\end{Theorem}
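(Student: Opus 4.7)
The plan is to obtain the solution variationally as a mountain pass critical point of the associated energy
\[
J(u)=\frac{1}{p}[u]_{s_1,p}^p+\frac{1}{q}[u]_{s_2,q}^q+\int_{\mb R^N} V(x)\Big(\tfrac{|u|^p}{p}+\tfrac{|u|^q}{q}\Big)dx-\int_{\mb R^N} K(x)\frac{F(u)}{|x|^\ba}dx,
\]
defined on the Banach space $X$. The first step is to verify that $J$ is well-defined and of class $C^1$ on $X$: the polynomial part is clear, while for the reaction term one invokes the singular Moser--Trudinger inequality of Theorem \ref{thm1} in combination with the hypotheses (f1) and (f2) or (f2)$'$ to bound $f(u)u$ and $F(u)$ pointwise by $\e|u|^p+C\Phi_\al(u)|u|^r$ for appropriate $\al$ and $r$, then integrate using the embeddings \eqref{emd} and H\"older's inequality.

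Next I would establish the mountain pass geometry. \emph{Local positivity near zero:} using the splitting above with $\al$ small and $\tau=V_0$, the norm $\|u\|_{s_1,p,V_0}$ controls the $\Phi_\al$ integral uniformly on small balls, so $J(u)\ge \rho>0$ on a sphere $\|u\|=r$ small. \emph{A curve going down:} from (f3) (respectively (f3)$'$ together with large $C_\delta$), the primitive $F$ is super $p$-homogeneous, so for a fixed nontrivial compactly supported $\varphi\ge 0$, $J(t\varphi)\to -\infty$ as $t\to\infty$. This yields a Cerami sequence $\{u_n\}\subset X$ at the minimax level $c>0$.

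The main analytic obstacle is to establish boundedness of $\{u_n\}$ and then strong convergence. In case (ii) the Ambrosetti--Rabinowitz condition (AR) handles boundedness in the standard way by combining $J(u_n)\to c$ with $\frac{1}{\nu}\ld J'(u_n),u_n\rd\to 0$. In case (i) the absence of (AR) is the delicate point: I would argue by contradiction, setting $v_n=u_n/\|u_n\|$, extracting a weak limit $v$, and using the monotonicity of $t\mapsto t^{1-p}f(t)$ in (f3) together with $\lim_{t\to\infty}t^{-p}F(t)=\infty$ to contradict either the subset $\{v\ne 0\}$ via Fatou on $\int K(x)F(u_n)/(\|u_n\|^p|x|^\ba)$, or, if $v\equiv 0$, by choosing $t_n\in[0,1]$ maximizing $J(t_nu_n)$ and running the Lions-type lemma provided by hypothesis \eqref{eqK} (Lemma \ref{cmp} of the paper). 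Once boundedness is in hand, up to a subsequence $u_n\rightharpoonup u$ in $X$ and $u_n\to u$ a.e.; the compactness property \eqref{eqK} embodied in Lemma \ref{cmp}, combined with the Moser--Trudinger control from Theorem \ref{thm1}, yields $\int K(x)\frac{f(u_n)}{|x|^\ba}\varphi\to \int K(x)\frac{f(u)}{|x|^\ba}\varphi$ for every $\varphi\in X$, hence $J'(u)=0$. A Br\'ezis--Lieb type argument applied separately to the $(p,s_1)$ and $(q,s_2)$ Gagliardo seminorms then upgrades weak convergence to strong convergence.

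The last issue, specific to case (ii), is the standard threshold phenomenon: one must guarantee that the minimax level $c$ lies strictly below the level $c^\ast$ at which the singular Moser--Trudinger inequality loses compactness, namely of the form $c^\ast=\tfrac{N-s_1\ba/s_1}{N}\bigl(\al_{N,s_1}/\al_0\bigr)^{(N-s_1)/s_1}\cdot\tfrac{1}{p}$ up to the obvious $\beta$-correction. I would estimate $c$ from above by $\sup_{t\ge 0}J(t\varphi_\ast)$ for a suitable concentrating test function $\varphi_\ast$ (e.g.\ a fractional Moser sequence), and the smallness thus required is exactly what the assumption ``$C_\delta$ sufficiently large'' in (f3)$'$ buys: the super $\delta$-homogeneous lower bound $F(t)\ge C_\delta t^\delta$ forces $\sup_tJ(t\varphi_\ast)<c^\ast$. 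Nonnegativity of the solution is automatic from $f(t)=0$ for $t\le 0$: testing the equation against $u^-$ gives $\|u^-\|=0$. The hardest step, as indicated, is the boundedness argument in the subcritical no-AR case (i) and the fine level estimate in the critical case (ii); both lean crucially on (f3)/(f3)$'$ and on the Moser--Trudinger machinery of Theorem \ref{thm1}.
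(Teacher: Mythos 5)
Your plan tracks the paper closely in the subcritical case (i): mountain-pass geometry, Cerami sequences, the no-(AR) boundedness argument by contradiction using the normalised sequence $w_n=u_n/\|u_n\|$ with Fatou and the maximisers $t_n$, the compactness from property \eqref{eqK} (Lemma~\ref{cmp}), and passing to the limit in $\mc J'$. For the upgrade to strong convergence the paper uses the Simon-type monotonicity inequalities for the $p$- and $q$-Gagliardo kernels rather than Br\'ezis--Lieb, but these serve the same purpose and your variant would work with a little extra bookkeeping.

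The divergence --- and the real gap --- is in case (ii). You propose estimating the mountain-pass level from above by a concentrating fractional Moser sequence $\varphi_\ast$ and comparing against an explicit threshold $c^\ast$ beneath which the singular Moser--Trudinger embedding retains compactness. This presupposes a Lions-type concentration-compactness alternative for the critical fractional Moser--Trudinger inequality (with the Hardy weight $|x|^{-\ba}$) in $W^{s,p}(\mb R^N)$, which is neither proved in the paper nor cited there; the explicit formula you wrote for $c^\ast$ is an ansatz, not something derived from the ingredients at hand (Theorem~\ref{thm3} only gives finiteness/blow-up of the supremum, not a refined decomposition). The paper avoids this entirely. After normalising $\al_0=(1-\ba/N)\al_{N,s_1}$, it bounds the level $c$ by testing with a \emph{fixed} compactly supported bump $\psi$ and using (f3)$'$ with $C_\de$ large, and then combines this with (AR) to conclude $\sup_n\|u_n\|<1$ (Lemma~\ref{lem2}). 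Once the Cerami sequence lies strictly inside the unit ball, one picks $\al_L$ with $\al_0<\al_L<(1-\ba/N)\al_{N,s_1}/(\sup_n\|u_n\|)^{N/(N-s_1)}$; by (f2)$'$ the nonlinearity is then subcritical relative to $\Phi_{\al_L}$, so the subcritical compactness machinery (Lemma~\ref{cmp}) applies verbatim --- this is exactly Corollary~\ref{cor1}. No Moser sequence, no sharp threshold and no concentration-compactness are required. To make your route rigorous you would first have to establish the refined Moser--Trudinger inequality in this fractional weighted setting, which is a substantial separate result.
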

 \begin{Remark}
  We remark that the results of Theorems \ref{thm1} and \ref{thm2} are valid for equations of the type $(\mc P)$ involving a more general class of operators, for instance, operators of the form 
 	\begin{equation*}
 	\mc L_{\mc K_{r,s}} u(x)= 2\ds\lim_{\e\ra 0}\int_{\mb R^N\setminus B_\e(x)} |u(x)-u(y)|^{r-2}(u(y)-u(x))\mc K_{r,s}(x-y)dy,
 	\end{equation*}
  where $(r,s)\in\{(p,s_1),(q,s_2)\}$ with $1<q<p=N/s_1$ and $0<s_2\le s_1<1$. Here, the singular kernel $\mc K_{r,s}: \mb R^N\setminus\{0\}\to\mb R^+$ is such that 
 	\begin{enumerate}
 		\item [(i)] $m\mc K_{r,s}\in L^1(\mb R^N)$, where $m(x):=\min\{1,|x|^{r}\}$.
 		\item[(ii)] There exist $c_p>0$ and $c_q\ge 0$ such that $\mc K_{p,s_1}(x) \ge c_p |x|^{-(N+ps_1)}$ and $\mc K_{q,s_2}(x) \ge c_q |x|^{-(N+qs_2)}$  for $x\in \mb R^N\setminus\{0\}$.
 	\end{enumerate} 
  The corresponding energy space is defined as $X:= \widetilde{W}_V^{s_1,p}(\mb R^N) \cap \widetilde{W}_V^{s_2,q}(\mb R^N)$, where in the definition of  $\widetilde{W}^{s,r}_V(\mb R^N)$, the term  $|x-y|^{-(N+rs)} $ is replaced by $\mc K_{r,s}(x-y)$.\\
  For example, one can take $\mc K_{r,s}(x)= a_r(x) |x|^{-(N+rs)}$, where $a_r:\mb R^N\to\mb R$ are non-negative bounded functions, for $r\in\{p,q\}$, with $a_p$ is bounded away from zero.
 \end{Remark}
\noi\textbf{Notations:} For convenience in notation, we will use the following:
\begin{align*}
  &\mc A_1(u,v)=\int_{\mb R^{2N}}  \frac{|u(x)-u(y)|^{p-2}(u(x)-u(y))(v(x)-v(y))}{|x-y|^{N+ps_1}}~ dxdy,\;\text{ for all } u,v \in W^{s_1,p}(\mb R^N),
  \end{align*}
 and analogously $\mc A_2$ is defined in $W^{s_2,q}(\mb R^N)$.
 \begin{Definition}
    A function $u \in X$ is said to be  a solution of problem $(\mc P)$, if for all $v\in X$
 	\begin{align*}
 	\mc A_1(u,v)+\mc A_2(u,v)+ \int_{\mb R^N}V(x)\big(|u|^{p-2}+|u|^{q-2}\big)uv~dx
 	- \int_{\mb R^N} \frac{K(x) f(u)v}{|x|^\ba}dx=0.
 	\end{align*}
 \end{Definition}
 The Euler functional $\mc J: X \ra\mb R$ associated to the problem $(\mc P)$ is defined as
  \begin{align*}
  	\mc J(u)= \frac{1}{p} \|u\|_{s_1,p}^p +\int_{\mb R^N} V(x) |u|^p~dx+ \frac{1}{q} \|u\|_{s_2,q}^q+\int_{\mb R^N} V(x) |u|^q ~dx-\int_{\mb R^N} \frac{K(x)F(u(x))}{|x|^\ba}dx.
  \end{align*}

  \section{Some technical results}\label{prelm.}
  In this section, we first establish some  compact embedding results for space $X$.  We have the following notion of weighted Lebesgue space,
  \begin{align*}
  	L^p_V(\mb R^N):= \bigg\{ u:\mb R^N\to\mb R : \|u\|_{p,V}=\int_{\mb R^N} V(x)|u(x)|^p dx<\infty \bigg\},
  \end{align*}
  which is a Banach space when equipped with the norm $\| \cdot \|_{p,V}$, for $0<V\in C(\mb R^N)$.
 \begin{Remark}\label{rem1}
  \begin{enumerate}
  	 \item[(A)] Due to the fact $0\le \ba<N$, one can easily get that the embedding $X\hookrightarrow L^m(\mb R^N; |x|^{-\ba})$ is continuous for all $m\ge p$, that is, for all $m\ge p$ there exists $C_m>0$ such that for all $u\in X$,
  	  \[ \int_{\mb R^N} |u(x)|^m |x|^{-\ba}dx \le C_m \|u\|^m.\]
  	 \item[(B)] Let $\Om\subset\mb R^N$ be a bounded domain. Arguments similar to that of \cite[Remark 2.1]{doMySq} gives us $X$ is compactly embedded into $L^m(\Om)$. Indeed, by \cite[Theorem 7.1]{nezzaH} we have $W^{s_1,p}(\mb R^N)$ is compactly embedded into $L^p(\Om)$ and then using \eqref{emd} and interpolation argument, we can prove $X$ is compactly embedded  into $L^m(\Om)$ for all $m\ge p$. The aforementioned compact embedding result with a straight forward verification, yields $X$ is compactly embedded into $L^m(\Om; |x|^{-\ba})$ for all $m\ge p$.
  \end{enumerate}
 \end{Remark}
 \begin{Proposition} 
  	The space $X$ is compactly embedded into $L^m_K(\mb R^N)$ for all $m\in(p,\infty)$.
  \end{Proposition}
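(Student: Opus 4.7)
My strategy is the standard Bartsch--Wang splitting, adapted to the present double-phase fractional setting. Let $\{u_n\} \subset X$ with $u_n \rightharpoonup u$ in $X$; by linearity of weak convergence and the norm structure of $L^m_K$, it suffices to treat $u \equiv 0$, so I assume $u_n \rightharpoonup 0$ in $X$ with $C_0 := \sup_n \|u_n\| < \infty$. The goal is then to show $\int_{\mb R^N} K(x)|u_n|^m\,dx \to 0$. I fix $\varepsilon > 0$ and decompose the integral into three pieces, each to be driven below $\varepsilon/3$.

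\emph{Small-value piece.} On $\{|u_n| \le \varepsilon_0\}$, since $m > p$, I estimate $|u_n|^m \le \varepsilon_0^{m-p}|u_n|^p$, so by boundedness of $K$ and the embedding \eqref{emd},
\[
\int_{\{|u_n| \le \varepsilon_0\}} K|u_n|^m\,dx \le \varepsilon_0^{m-p}\|K\|_\infty C_p \|u_n\|^p \le \varepsilon_0^{m-p}\|K\|_\infty C_p C_0^p,
\]
and choosing $\varepsilon_0$ small handles this piece uniformly in $n$. This step uses $m > p$ strictly.

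\emph{Tail piece.} Set $A_n := \{|u_n| > \varepsilon_0\}$. Chebyshev's inequality gives the uniform measure bound $|A_n| \le R := (C_p^{1/p} C_0/\varepsilon_0)^p$. Fix any $t > 1$ with conjugate $t'$. Hölder's inequality yields
\[
\int_{A_n \cap B_{R_1}^c} K|u_n|^m\,dx \le \left(\int_{A_n \cap B_{R_1}^c} K\,dx\right)^{1/t'} \|K\|_\infty^{1/t} \|u_n\|_{L^{mt}}^{m}.
\]
The norm $\|u_n\|_{L^{mt}}$ is uniformly bounded via \eqref{emd} since $mt \ge p$, and hypothesis (iii) applied with the measure bound $R$ forces the first factor to be arbitrarily small, uniformly in $n$, once $R_1$ is taken large enough. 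So this piece is below $\varepsilon/3$ for $R_1$ sufficiently large.

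\emph{Bounded-region piece.} On $A_n \cap B_{R_1}$ (which sits inside the bounded set $B_{R_1}$), the local compact embedding from Remark \ref{rem1}(B) gives $\int_{B_{R_1}} |u_n|^m\,dx \to 0$ as $n \to \infty$, and multiplying by $\|K\|_\infty$ kills this piece for $n$ large.

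The main obstacle is the tail piece, which requires a delicate three-way interplay: Chebyshev to extract the uniform measure bound, hypothesis (iii) to convert that bound into smallness of $\int K\,dx$ outside a large ball, and Hölder together with $X \hookrightarrow L^{mt}$ to transfer smallness from $\int K$ to $\int K|u_n|^m$. The small-value piece is where the strict inequality $m > p$ is essentially used, through the factor $\varepsilon_0^{m-p}$ that vanishes only when $m - p > 0$.
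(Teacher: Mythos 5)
Your proof is correct and takes essentially the same Bartsch--Wang type approach as the paper (which adapts Miyagaki--Pucci): you both control small values via $m>p$ and the $L^p$-embedding, the far tail via a Chebyshev measure bound combined with hypothesis (iii), and the near region via local compactness. The only organizational differences are cosmetic -- you reduce to $u\equiv 0$ at the outset and split into three spatial/level regions, whereas the paper packages the small/large-value control into a single pointwise inequality $K(x)|t|^m \le \varepsilon C_0\big(V(x)|t|^p+|t|^r\big)+CK(x)\chi_{[\tau_0,\tau_1]}(|t|)|t|^m$ and then works directly with $u_n\rightharpoonup u$, concluding norm convergence and invoking it to get strong convergence.
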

  \begin{proof}
  The proof given here is an adaptation of the proof of \cite[Proposition 2.1]{miyPu} for $N=1$. Here we provide only sketch of the proof. For fixed $r>m>p$ and $\e>0$, there exists $\tau_0=\tau_0(\e)$, $\tau_1=\tau_1(\e)$ with $0<\tau_0<\tau_1$, $C=C(\e)>0$ and $C_0>0$ depending only on $K$, such that for all $x\in\mb R^N$ and $t\in\mb R$,
  		\begin{align}\label{eq21}
  		K(x)|t|^m \le \e C_0 \big( V(x)|t|^p+|t|^r \big)+ CK(x) \chi_{[\tau_0,\tau_1]}(|t|) |t|^m.
  		\end{align}
  Let $\{u_n \}\subset X$ be a bounded sequence, then by reflexive property of the space $X$, there exists $u\in X$ such that $u_n\rightharpoonup u$ weakly in $X$. From the continuous embedding of $X$ into $L^r(\mb R^N)$ and boundedness of the sequence $\{ \|u_n\| \}$, for some $M>0$, we have
  		\begin{align*}
  		\|u_n\|_{p,V}^p\le M \; \; \mbox{and}\quad \|u_n\|_\ga^\ga \le M \; \; \mbox{for all }n\in\mb N \; \mbox{ and }\ga\in \{m,r\}.
  		\end{align*}
  Therefore, $Q(u_n):= C_0\big( \|u_n\|_{p,V}^p+ \|u_n\|_r^r \big)\le 2C_0 M$ for all $n\in\mb N$. Set 
  		\[ A_\e^n:=\{x\in\mb R^N : \tau_0\le |u_n(x)|\le \tau_1 \}.   \]
  Then, by the fact that $\{u_n\}$ is bounded in $L^m(\mb R^N)$, it is easy to observe that $\{|A_\e^n|\}$ is bounded w.r.t. $n$. Again, by \eqref{eqK}, for $\e>0$, there exists $r_\e>0$ such that 
  		\begin{align*}
  		\int_{A_\e^n \cap B_{r_\e}^c(0)} K(x)dx <\frac{\e}{C \tau_1^m}, \mbox{ for all }n\in\mb N.
  		\end{align*}
  Using this together with the observation that $Q(u_n)$ is bounded, \eqref{eq21} gives us
  	 \begin{align}\label{eq22}
  		\int_{B_{r_\e}^c(0)} K(x)|u_n|^m \le 2C_0 M\e + C\tau_1^m \int_{A_\e^n \cap B_{r_\e}^c(0)} K(x)dx < (2C_0 M+1)\e,  \mbox{ for all }n\in\mb N.
  	 \end{align}
  Moreover, by compact embedding of the space $X$ into $L^\ga(B_{r_\e}(0))$ for all $\ga\ge p$ (see Remark \ref{rem1}), we get 
  	 \begin{align}\label{eq23}
  		 \lim_{n\ra\infty} \int_{B_{r_\e}(0)} K(x)|u_n|^m = \int_{B_{r_\e}(0)} K(x)|u|^m.
  	 \end{align}
  Therefore, \eqref{eq22} and \eqref{eq23}, implies
  	  \begin{align*}
  		 \lim_{n\ra\infty} \int_{\mb R^N} K(x)|u_n|^m = \int_{\mb R^N} K(x)|u|^m.
  	  \end{align*}
  From the above equation, it is easy to deduce that $u_n\ra u$ in $L^m_K(\mb R^N)$, as $n\ra\infty$.\QED
  \end{proof}
 We state the following  Moser-Trudinger type inequality for fractional Sobolev spaces in case of bounded domain.
\begin{Theorem}(\cite{parini,martin})\label{thm3}
 Let $\Om$ be a bounded domain in $\mb R^N$ $(N\ge 2)$ with Lipschitz boundary, and $s_1\in(0,1)$, $ps_1=N$. Let $\widetilde{W}_0^{s_1,p}(\Om)$ be the space defined as the completion of $C_c^\infty(\Om)$ with respect to $\| \cdot \|_{W^{s_1,p}(\mb R^N)}$ norm. Then, there exists $\al_{N,s_1}>0$ such that
	\begin{align*}
	 \sup\bigg\{ \int_{\Om} \exp\big( \al|u|^\frac{N}{N-s_1}\big) : u\in \tilde{W}_0^{s_1,p}(\Om), \|u\|_{p,s_1}\le 1 \bigg\}<\infty \quad \mbox{ for }\al\in [0,\al_{N,s_1}).
	\end{align*}
 Moreover,
	\begin{align*}
	\sup\bigg\{ \int_{\Om} \exp\big( \al|u|^\frac{N}{N-s_1}\big) : u\in \tilde{W}_0^{s_1,p}(\Om), \|u\|_{p,s_1}\le 1 \bigg\}=\infty \quad \mbox{ for }\al\in (\al^*_{N,s_1},\infty),
	\end{align*}
 where
	\[ \al^*_{N,s_1}= N\left( \frac{2(N\omega_N)^2\Ga(p+1)}{N!}\sum_{k=0}^{\infty}\frac{(N+k-1)!}{k!}\frac{1}{(N+2k)^p}  \right)^\frac{s_1}{N-s_1},     \]
	with $\omega_N$ as the volume of the $N$-dimensional unit ball.
\end{Theorem}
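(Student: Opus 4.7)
The plan is to establish the two parts of Theorem \ref{thm3} by a two-step reduction: first collapse the problem to the class of radial non-increasing functions via rearrangement, then handle the resulting one-dimensional problem via a sharp point-wise decay estimate together with a Moser-sequence construction for the complementary bound on $\alpha^*_{N,s_1}$.

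For the first part, I would begin by applying the fractional Pólya--Szegő inequality (Almgren--Lieb or Frank--Seiringer): if $u \in \widetilde{W}^{s_1,p}_0(\Om)$ and $u^{*}$ denotes its symmetric decreasing rearrangement, then $u^{*} \in \widetilde{W}^{s_1,p}_0(\Om^{*})$ with $[u^{*}]_{s_1,p} \le [u]_{s_1,p}$ and $\|u^{*}\|_p = \|u\|_p$, so $\|u^{*}\|_{W^{s_1,p}} \le \|u\|_{W^{s_1,p}} \le 1$. Since the integrand $\exp(\al |u|^{N/(N-s_1)})$ is equimeasurable with $\exp(\al |u^{*}|^{N/(N-s_1)})$, it suffices to bound the integral for radial non-increasing admissible functions on $\Om^{*}$. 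The core analytic step is then a Lorentz-type decay lemma: for a radial non-increasing $u$ with $\|u\|_{W^{s_1,p}} \le 1$, one obtains a point-wise estimate of the form
\begin{equation*}
 |u(x)|^{\frac{N}{N-s_1}} \le \frac{1}{\al_{N,s_1}}\,\log\!\frac{|\Om^{*}|}{|x|^{N}\omega_N} + C,
\end{equation*}
with $\al_{N,s_1}$ the sharp constant dictated by the fractional capacity of a point. Inserting this into $\int_{\Om^{*}} \exp(\al |u|^{N/(N-s_1)})\,dx$ and integrating in polar coordinates, the singularity $|x|^{-N\al/\al_{N,s_1}}$ is integrable precisely when $\al < \al_{N,s_1}$, which gives the desired supremum bound.

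For the second part (the blow-up for $\al > \al^{*}_{N,s_1}$), I would construct an explicit Moser-type sequence adapted to the fractional operator. On $\Om = B_1$ choose
\begin{equation*}
 u_k(x) = \frac{1}{\|\varphi_k\|_{W^{s_1,p}}}\,\varphi_k(x),
\end{equation*}
where $\varphi_k$ concentrates at the origin at scale $e^{-k}$ and mimics the fundamental solution of the fractional $p$-Laplacian on a ball. The key computation is to evaluate $[\varphi_k]_{s_1,p}^p$ sharply; the series
\begin{equation*}
 \frac{2(N\omega_N)^2\Ga(p+1)}{N!}\sum_{k=0}^{\infty}\frac{(N+k-1)!}{k!}\frac{1}{(N+2k)^p}
\end{equation*}
arises precisely as the renormalized energy coefficient, coming from the Jacobi/Gegenbauer expansion of the kernel $|x-y|^{-(N+ps_1)}$ against radial truncations — this is where the explicit form of $\al^{*}_{N,s_1}$ comes from. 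Once $[\varphi_k]_{s_1,p}^p$ is computed up to a lower order, one verifies that for any $\al > \al^{*}_{N,s_1}$,
\begin{equation*}
 \int_{B_1} \exp\!\bigl(\al |u_k|^{N/(N-s_1)}\bigr)\,dx \longrightarrow \infty.
\end{equation*}

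The main obstacle is the sharp one-dimensional decay estimate in the first step. Unlike the local case, there is no Green's representation that turns $u(x)$ into an $L^p$ norm of $\nabla u$ along a ray, so one must work directly with the nonlocal Gagliardo seminorm. The standard workaround is a dyadic decomposition of $[|x|,R]$ combined with a fractional Hardy--Rellich estimate controlling $u(r) - u(2r)$ by $[u]_{s_1,p}$ on an annulus, and summing the telescoping series. Getting the constant $\al_{N,s_1}$ (as opposed to an unspecified positive constant) is delicate and is precisely what separates an existence-of-some-threshold result from the sharp inequality; in \cite{parini,martin} this is handled by a careful capacitary characterization which I would invoke rather than reprove.
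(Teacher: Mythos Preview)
The paper does not supply a proof of Theorem~\ref{thm3}: it is stated with the citation \cite{parini,martin} (Parini--Ruf and Martinazzi) and then used as a black box, notably in Lemma~\ref{MTbd} and in the proof of Theorem~\ref{thm1}. There is therefore no argument in the paper to compare your proposal against.

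As a sketch of what is done in the cited references, your outline is broadly on target: the reduction to radial non-increasing competitors via the fractional P\'olya--Szeg\H{o} inequality and the Moser-type concentrating sequence for the blow-up above $\al^*_{N,s_1}$ are indeed the two structural moves, and the series in the definition of $\al^*_{N,s_1}$ does arise exactly from evaluating the Gagliardo energy of such a sequence. One caution: the pointwise logarithmic decay estimate you write for radial admissible $u$ is not how the finiteness part is actually obtained in \cite{parini}; there the argument passes through an embedding of $\widetilde{W}^{s_1,p}_0$ into a suitable rearrangement-invariant (Lorentz--Zygmund type) space, after which a known Moser--Trudinger bound in that scale is invoked. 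Your dyadic/telescoping route, even if it can be made to work, would at best produce \emph{some} positive threshold rather than the optimal one --- which, to be fair, is all the statement of Theorem~\ref{thm3} asserts and all the present paper requires.
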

  Similar to the result of \cite{adisand}, we prove singular Moser-Trudinger inequality for fractional Sobolev spaces in bounded domains, which will help us to prove our Theorem \ref{thm1}.
  \begin{Lemma}\label{MTbd}
  	Let $\Om\subset \mb R^N$($N\ge 2$) be a bounded domain with Lipschitz boundary and let $u\in W^{s_1,p}_{0}(\Om)$. Then, for every $\al>0$ and $\ba\in[0,N)$,
  	\begin{align*}
  	\int_{\Om} \frac{e^{\al |u|^{N/(N-s_1)}}  }{|x|^\ba}dx <\infty.
  	\end{align*}
  	Moreover, if $\frac{\al}{\al_{N,s_1}}+\frac{\ba}{N}<1$, then
  	\begin{align}\label{eq15}
  	\sup_{\|u\|_{W^{s_1,p}(\Om)}\le 1}\int_{\Om} \frac{e^{\al |u|^{N/(N-s_1)}}  }{|x|^\ba}dx <\infty,
  	\end{align}
  \end{Lemma}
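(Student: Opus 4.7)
The natural strategy is to reduce the weighted inequality to the unweighted Moser–Trudinger inequality (Theorem \ref{thm3}) by peeling off the singular factor $|x|^{-\ba}$ via H\"older's inequality. For both assertions, the skeleton is the same: we write
\begin{align*}
\int_{\Om}\frac{e^{\al|u|^{N/(N-s_1)}}}{|x|^\ba}dx\le \left(\int_{\Om} e^{q\al|u|^{N/(N-s_1)}}dx\right)^{1/q}\left(\int_{\Om}\frac{dx}{|x|^{q'\ba}}\right)^{1/q'},
\end{align*}
with conjugate exponents $q,q'>1$; the singular factor is finite precisely when $q'\ba<N$, and the exponential factor falls under the scope of Theorem \ref{thm3} as long as $q\al$ stays below $\al_{N,s_1}$ after renormalization. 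The assertions differ only in how the exponents are chosen.

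For the first claim, given $u$, $\al>0$ and $\ba\in[0,N)$, I would fix $q>1$ close enough to $1$ so that $q'\ba<N$, guaranteeing finiteness of the singular integral on the bounded set $\Om$. The remaining task is to show $\int_\Om e^{q\al|u|^{N/(N-s_1)}}dx<\infty$ for every individual $u\in W^{s_1,p}_0(\Om)$ (note that we do \emph{not} claim uniformity here). I would approximate $u$ in $W^{s_1,p}(\mb R^N)$ by $v\in C_c^\infty(\Om)$ with $\|u-v\|_{W^{s_1,p}(\mb R^N)}$ so small that $q\al(1+\e)\|u-v\|^{N/(N-s_1)}<\al_{N,s_1}$ for some small $\e>0$; this is possible since $N/(N-s_1)>1$. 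Using the Young-type bound $(a+b)^{N/(N-s_1)}\le (1+\e)a^{N/(N-s_1)}+C_\e b^{N/(N-s_1)}$ with $a=|u-v|$ and $b=|v|$, I split the exponential into a product and apply H\"older once more; Theorem \ref{thm3} (after normalization by $\|u-v\|$) controls the first factor and boundedness of the smooth function $v$ gives the second.

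For the second, stronger assertion, the hypothesis $\al/\al_{N,s_1}+\ba/N<1$ is exactly what makes the two conditions $q\al<\al_{N,s_1}$ and $q'\ba<N$ compatible: rewriting them as $1/q>\al/\al_{N,s_1}$ and $1/q'>\ba/N$ and summing gives $1>\al/\al_{N,s_1}+\ba/N$. Thus a valid $q$ exists, and with such a choice the H\"older split above applies with the first factor uniformly bounded on $\{\|u\|_{W^{s_1,p}(\mb R^N)}\le 1\}$ by Theorem \ref{thm3} (applied to $u$ regarded as an element of $\widetilde{W}_0^{s_1,p}(\Om)$ via zero extension) and the second factor a finite constant depending only on $q'$, $\ba$ and $\Om$. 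Taking the supremum over the unit ball yields \eqref{eq15}.

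The main obstacle is really the bookkeeping of norms: one must verify that $u\in W^{s_1,p}_0(\Om)$ with $\|u\|_{W^{s_1,p}(\Om)}\le 1$ corresponds (via zero extension to $\mb R^N$) to an element of $\widetilde{W}_0^{s_1,p}(\Om)$ to which Theorem \ref{thm3} applies with a comparable norm constraint, and that the constants introduced by this comparison do not destroy the condition $q\al<\al_{N,s_1}$. For Lipschitz $\Om$ this transition is standard since zero extension is a bounded linear operator between the corresponding spaces; by slightly shrinking $q$ (still keeping $q'\ba<N$) one absorbs the comparison constant. The density step in the first claim is a routine adaptation of the classical Trudinger finiteness argument to the fractional setting.
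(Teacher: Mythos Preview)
Your approach matches the paper's: both assertions are proved by the same H\"older split, peeling off $|x|^{-\ba}$ and applying Theorem~\ref{thm3} to the exponential factor, with your exponent conditions $q\al<\al_{N,s_1}$ and $q'\ba<N$ in the second part algebraically equivalent to the paper's choice of $\tilde\al\in(\al,\al_{N,s_1})$ and $t>1$ satisfying $\al/\tilde\al+\ba t/N=1$. The paper is terser --- it takes finiteness of $\int_\Om e^{\ga|u|^{N/(N-s_1)}}dx$ for each individual $u$ and arbitrary $\ga>0$ as known (without your density step) and does not comment on the norm-comparison issue you flag --- but the core argument is the same.
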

  \begin{proof}
  	 Let $t>1$ be such that $\ba t<N$, then using H\"older inequality and Theorem \ref{thm3}, we deduce that
  	\begin{align*}
  	  \int_{\Om} \frac{e^{\al |u|^{N/(N-s_1)}}  }{|x|^\ba}dx \le \left( \int_{\Om} e^{\al t' |u|^{N/(N-s_1)}}  dx \right)^\frac{1}{t'} \left(\int_{\Om} \frac{1}{|x|^{\ba t}}dx \right)^\frac{1}{t} <\infty.
  	\end{align*}
  	For the second part of the theorem, we first observe that there exist $\tilde{\al}\in(\al, \al_{N,s_1})$ and $t>1$ such that $\frac{\al}{\tilde{\al}}+\frac{\ba t}{N}=1$ (this can be done by first choosing $\tilde{\al}<\al_{N,s_1}$ such that $\frac{\al}{\al_{N,s_1}}+\frac{\ba}{N}<\frac{\al}{\tilde{\al}}+\frac{\ba}{N}<1$). Now by H\"older inequality, we have
  	\begin{align*}
  	  \sup_{[u]_{s_1,p}\le 1}\int_{\Om} \frac{e^{\al |u|^{N/(N-s_1)}}  }{|x|^\ba}dx \le \sup_{[u]_{s_1,p}\le 1} \left(  \int_{\Om} e^{\tilde\al |u|^{N/(N-s_1)}}  dx   \right)^\frac{\al}{\tilde\al} \left(\int_{\Om} \frac{1}{|x|^{N/t}} \right)^{\ba t/N},
  	\end{align*}
  	since $\tilde{\al}<\al_{N,s_1}$ and $t>1$, by Theorem \ref{thm3}, we get that the the right hand side quantity is finite. \QED
  \end{proof}
  Before proving Theorem \ref{thm1}, we state the following radial lemma.
  \begin{Lemma}\label{rad}
  	Let $N\ge 2$ and $u\in L^p(\mb R^N)$, with $1\le p<\infty$, be a radially symmetric non-increasing function. Then
  	\begin{align*}
  		|u(x)|\le |x|^{-N/p} \Big(\frac{N}{\omega_{N-1}} \Big)^{1/p} \|u\|_p, \;\mbox{for }x\neq 0,
  	\end{align*}
  	where $\omega_{N-1}$ is the $(N-1)$ dimensional measure of $(N-1)$ sphere.
  \end{Lemma}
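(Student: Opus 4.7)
The plan is to exploit the monotonicity and symmetry of $u$ on the ball $B_{|x|}(0)$. Fix $x \in \mathbb{R}^N$ with $x \neq 0$ and set $r = |x|$. Since $u$ is radially symmetric and non-increasing, for every $y \in B_r(0)$ we have $|y| \le r$, hence $|u(y)| \ge |u(x)|$. This pointwise lower bound inside the ball is the only ingredient needed.

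Integrating the inequality $|u(y)|^p \ge |u(x)|^p$ over $B_r(0)$ gives
\begin{align*}
\|u\|_{L^p(\mb R^N)}^p \ge \int_{B_r(0)} |u(y)|^p\, dy \ge |u(x)|^p \, |B_r(0)|.
\end{align*}
Using the standard identity $|B_r(0)| = \frac{\omega_{N-1}}{N} r^N$ (which follows from integrating in spherical coordinates, since $\omega_{N-1}$ is the $(N{-}1)$-dimensional measure of the unit sphere), this rearranges to
\begin{align*}
|u(x)|^p \le \frac{N}{\omega_{N-1}}\, r^{-N}\, \|u\|_{L^p(\mb R^N)}^p.
\end{align*}
Taking the $p$-th root and recalling $r = |x|$ yields the desired bound.

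There is essentially no obstacle here: the argument is one line once the monotonicity is used, and the only subtlety is the bookkeeping constant $|B_1(0)| = \omega_{N-1}/N$ which must be written in terms of the surface measure $\omega_{N-1}$ rather than the volume of the unit ball. Note also that the estimate is meaningful for $x \neq 0$ only, which matches the statement; at the origin the bound degenerates, consistent with the fact that radial monotone $L^p$ functions may be unbounded near $0$.
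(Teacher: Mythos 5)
Your proof is correct and is the standard argument; the paper simply states this radial lemma without proof (it is a classical Strauss-type estimate), so there is no authorial proof to compare against. The chain of estimates --- monotonicity forces $u(y)\ge u(x)$ on $B_{|x|}(0)$, then integrate over the ball and use $|B_r(0)|=\frac{\omega_{N-1}}{N}r^N$ --- is exactly the expected route.

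One small point worth making explicit: you pass from ``$u$ non-increasing'' to ``$|u(y)|\ge |u(x)|$ for $|y|\le |x|$,'' which is not a formal consequence of monotonicity alone (if $u$ changed sign, the absolute values could flip the inequality). It is true here because a radially non-increasing $L^p$ function must be nonnegative: its radial limit at infinity exists and must be $0$, and monotonicity then gives $u\ge 0$ everywhere. In the paper's actual use of the lemma $u$ is a Schwarz rearrangement $u^*$, which is nonnegative by construction, so the issue never arises; still, a one-line remark that $u\ge 0$ a.e.\ would make the step $|u(y)|\ge|u(x)|$ airtight. Everything else, including the constant $|B_1(0)|=\omega_{N-1}/N$, is correct.
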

  
 \subsection{Proof of Theorem \ref{thm1}} 
 Without loss of generality, we assume $u\ge 0$ and let $u^*$ be the Schwarz symmetrization of $u$.  Then by (\cite{almgren,brock}), for any continuous and increasing function $G:[0,\infty)\to [0,\infty)$, there holds
 \begin{align*}
 \int_{\mb R^N} G(u^*(x))dx=\int_{\mb R^N} G(u(x))dx.
 \end{align*}
 Moreover, for all $u\in W^{s,p}(\mb R^N)$ and $1\le m<\infty$, $u^*\in W^{s,p}(\mb R^N)$ with
 \begin{align}\label{eq24}
 	 \int_{\mb R^N}\int_{\mb R^N} \frac{|u^*(x)-u^*(y)|^p}{|x-y|^{2N}}dxdy \le \int_{\mb R^N}\int_{\mb R^N} \frac{|u(x)-u(y)|^p}{|x-y|^{2N}}dxdy \; \; \mbox{ and } \|u^*\|_m=\|u\|_m.
 \end{align}
 Therefore, by Hardy-Littlewood inequality for symmetrization and the fact that $\big(1/|x|^\ba \big)^*=1/|x|^\ba$, we get
 \begin{align}\label{eqS}
 	\int_{\mb R^N} \frac{\Phi_\al(u)}{|x|^\ba}~dx\le 	\int_{\mb R^N} \frac{\Phi_\al(u^*)}{|x|^\ba}~dx.
 \end{align}
  Fix $R>0$ (to be specified later), we have
  \begin{align}\label{eq25}
  	\int_{|x|>R} \frac{\Phi_\al(u^*)}{|x|^\ba}~dx = \int_{|x|>R}  \frac{1}{|x|^\ba} \sum_{j=k_0}^{\infty} \frac{\al^j}{j!} |u^*|^{jp'}~dx,
  \end{align}
 where $k_0$ is the smallest integer such that $k_0\ge p-1$ and $p'=p/(p-1)$ is the H\"older conjugate of $p$. Now we consider the following cases:\\
 \textit{Case (i)}: For all $j\ge k_0>p-1$.\\
  Using Lemma \ref{rad} and \eqref{eq24}, we obtain
 \begin{align}\label{eq26}
 	 \int_{|x|>R} \frac{|u^*|^{jp'}}{|x|^\ba} &\le \int_{|x|>R} |x|^{-\frac{N}{p-1}j-\ba} \Big(\frac{N}{\omega_{N-1}} \Big)^\frac{j}{p-1} \|u^*\|_p^{jp'} \nonumber \\
 	 &\leq \Big(\frac{N}{\omega_{N-1}} \Big)^\frac{j}{p-1} \|u\|_p^{jp'}  R^{N-\frac{N}{p-1}j-\ba}.
 \end{align}
 \textit{Case (ii)}: If $k_0=p-1$.\\
  Using \eqref{eq24}, we obtain
  \begin{align}\label{eq27}
     \int_{|x|>R} \frac{|u^*|^{k_0p'}}{|x|^\ba} \le \int_{|x|>R} \frac{|u^*(x)|^p}{|x|^\ba}\le \frac{1}{R^\ba}\int_{\mb R^N} |u^*(x)|^p dx=\frac{\|u\|_p^p}{R^\ba}.
 \end{align}
 Then, coupling \eqref{eq26} and \eqref{eq27} in \eqref{eq25}, we get
 \begin{align*}
 	 \int_{|x|>R} \frac{\Phi_\al(u^*)}{|x|^\ba} \le \frac{1}{R^\ba}  \Big( C_\al \|u\|_p^p + \sum_{j=k_0+1}^{\infty} \frac{\al^j}{j!}  \Big(\frac{N}{\omega_{N-1}} \Big)^\frac{j}{p-1} \|u\|_p^{jp'}  R^{N-\frac{N}{p-1}j} \Big).
 \end{align*}
 For fixed $u\in W^{s,p}(\mb R^N)$, we choose $R>0$ such that
    \[ R^{-\frac{N}{p-1}} \Big(\frac{N}{\omega_{N-1}} \Big)^{1/(p-1)} \|u\|_p^{p'} =1,\]
this implies
\begin{align}\label{eq28}
   \int_{|x|>R} \frac{\Phi_\al(u^*)}{|x|^\ba} \le \frac{1}{R^\ba} C(N,s,\al,\|u\|_p)<\infty.
\end{align}
Next, for $p'=N/(N-s)$, there exists $B=B(N,s)>0$ such that for all $\e>0$,
\begin{align}\label{eq29}
	(u+v)^{p'}\le u^{p'}+ B u^{p'-1} v+v^{p'}, \quad \mbox{and }u^\ga v^\ga \le \e u+ \e^{-\ga/\ga'} v
\end{align}
for all $u,v\ge 0$ and  $\ga,\ga'>0$ satisfying $\ga+\ga'=1$. For fixed $x_0\in\mb R^N$ with $|x_0|=1$, define
\begin{align*}
	v(x)=\begin{cases}
	   u^*(x)-u^*(Rx_0),  \; \mbox{if }x\in B_R(0) \\
	   0, \qquad \mbox{if }x\in\mb R^N\setminus B_R(0).
	\end{cases}
\end{align*}
Then, since $u^*$ is radially decreasing function, we have $v\ge 0$ and by \cite[Lemma 2.2]{xiang},
 \begin{align*}
 	[v]_{s,p}^p \le [u^*]_{s,p}^p \le [u]_{s,p}^p<\infty.
 \end{align*}
 Therefore, $v\in W^{s,p}(\mb R^N)$ with $v=0$ a.e. in $\mb R^N\setminus B_R(0)$. Using \eqref{eq29}, for $x\in B_R(0)$, we deduce that
  \begin{align*}
  	|u^*(x)|^{p'}= |v+u^*(Rx_0)|^{p'}\le v^{p'}+ B v^{p'-1} u^*(Rx_0)+u^*(Rx_0)^{p'},
  \end{align*}
 and
 \[ v^{p'-1}u^*(Rx_0)= (v^{p'})^{(p'-1)/p'} (u^*(Rx_0)^{p'})^{1/p'} \le \frac{\e}{A} v^{p'}+ \big(\frac{\e}{A}\big)^{-1/(p'-1)} u^*(Rx_0)^{p'}. \]
 Thus,
 \begin{align*}
 |u^*(x)|^{p'} \le (1+\e) v^{p'}+ C(\e,s,N) u^*(Rx_0)^{p'},
 \end{align*}
 where $C(\e,s,N)= 1+\big(A/\e\big)^{1/(p'-1)}$. Therefore, using Lemma \ref{MTbd}, we obtain 
 \begin{align}\label{eq30}
 		\int_{|x|\le R} \frac{\Phi_\al(u^*)}{|x|^\ba} \le \int_{|x|\le R} \frac{e^{\al |u^*|^{p'}}}{|x|^\ba} \le  e^{\al C(\e,s,N) |u^*(Rx_0)|^{p'}} \int_{|x|\le R} \frac{e^{\al (1+\e) |v|^{p'}}}{|x|^\ba}<\infty.
 \end{align}
 This together with \eqref{eq28} and \eqref{eqS} proves the first part of the Theorem.\\
 For the second part, we consider $u\in W^{s,p}(\mb R^N)$ such that $\|u\|_{s,p,\tau}\le 1$.
 From \eqref{eq26}, we have
 \begin{equation}\label{eq00}
  \begin{aligned}
 \int_{|x|>R} \frac{|u^*|^{jp'}}{|x|^\ba} &\le  \Big(\frac{N}{\omega_{N-1}} \Big)^\frac{j}{p-1} \|u\|_p^{jp'}  R^{N-\frac{N}{p-1}j-\ba} \\
 &\le R^{N-\ba} \Big(\frac{N}{\omega_{N-1}} \Big)^\frac{j}{p-1} \tau^{-\frac{j}{p-1}} R^{-\frac{N}{p-1}j},
 \end{aligned}
 \end{equation}
 where in the last inequality we used the fact $\|u\|_{s,p,\tau}\le 1$. Choosing $R>0$ such that
 \[ R^{-N} \frac{N}{\omega_{N-1}\tau}=1,  \]
 then, on account of \eqref{eq00}, we deduce from \eqref{eq25} that 
 \begin{align}\label{eq31}
 \int_{|x|>R} \frac{\Phi_\al(u^*)}{|x|^\ba} \le R^{N-\ba} \sum_{j=k_0}^{\infty} \frac{\al^j}{j!}\le C(N,s,\al,\ba,\tau).
 \end{align}
Now due to the fact that $\|u\|_{s,p,\tau}\le 1$ and $v(x)\le u^*(x)$ in $B_R(0)$, we have
 \begin{align*}
 	\|v\|_{s,p,\tau}^p=[v]_{s,p}^p+\tau \|v\|_p^p\le [u^*]_{s,p}^p+\tau \|u^*\|_p^p \leq [u]_{s,p}^p+\tau \|u\|_p^p\le 1,
 \end{align*}
 and by using the radial lemma \ref{rad}, we obtain
 \begin{align*}
 	u^*(Rx_0)^{p'}\le |Rx_0|^{-N/(p-1)} \Big(\frac{N}{\omega_{N-1}} \Big)^{1/(p-1)} \|u^*\|_p^{p'} \le R^{-N/(p-1)} \Big(\frac{N}{\omega_{N-1}\tau} \Big)^{1/(p-1)}.
 \end{align*}
 Therefore, from \eqref{eq30} and \eqref{eq15}, we get
 \begin{align}\label{eq32}
    \int_{|x|\le R} \frac{\Phi_\al(u^*)}{|x|^\ba} \le e^{C(\e,s,N,\tau,\al,\ba)} \int_{|x|\le R} \frac{1}{|x|^\ba} \exp\{\al (1+\e) \|v\|_{s,p,\tau}^{p'} \big|\frac{v}{\|v\|_{s,p,\tau}}\big|^{p'}\} \le C(N,s,\tau,\al,\ba)
 \end{align}
if we choose $\e>0$ such that $\al(1+\e)<\big(1-\ba/N\big) \al_{N,s}$.
 Taking into account \eqref{eq31}, \eqref{eq32} and \eqref{eqS}, we complete the proof of the second part of the Theorem. \QED
 Next, we establish the compactness result under the assumption that (f1) through (f3) hold, that is, the subcritical case.
 \begin{Lemma}\label{cmp}
 	Let $\{u_n\}\subset X$ be a sequence such that $u_n\rightharpoonup u$ weakly in $X$, for some $u\in X$. Then up to a subsequence, the following hold
 	\begin{align*}
 	&\lim_{n\ra\infty} \int_{\mb R^N} K(x)|x|^{-\ba}F(u_n(x))dx = \int_{\mb R^N} K(x)|x|^{-\ba}F(u(x))dx \\
 	&\lim_{n\ra\infty} \int_{\mb R^N} {K(x)|x|^{-\ba}f(u_n(x))u_n(x)}dx = \int_{\mb R^N} {K(x)|x|^{-\ba}f(u(x))u(x)}dx \\
 	&\lim_{n\ra\infty} \int_{\mb R^N} {K(x)|x|^{-\ba}f(u_n(x))v(x)}dx = \int_{\mb R^N} {K(x)|x|^{-\ba}f(u(x))v(x)}dx, \; \; \mbox{for all }v\in X.
 	\end{align*}
 	\end{Lemma}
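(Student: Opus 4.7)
My plan is to verify a Vitali-type convergence criterion in $L^1(\mb R^N)$: pointwise a.e.\ convergence of the integrand, equi-integrability on bounded sets, and tightness at infinity. I describe the scheme for the first identity; the second follows by the same argument with $F(t)$ replaced by $tf(t)$, and the third (with test function $v\in X$) follows after a H\"older split separating $v$ from $f(u_n)$.

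First, weak convergence gives $\|u_n\|\le M$ and, by Proposition 3.1, a subsequence with $u_n\to u$ in $L^m_K(\mb R^N)$ for every $m>p$; passing to a further subsequence, $u_n\to u$ a.e.\ on $\mb R^N$, so $F(u_n)\to F(u)$, $u_nf(u_n)\to uf(u)$ and $f(u_n)\to f(u)$ a.e.\ Next, combining (f1) and (f2) produces, for every $\e,\al>0$ and every $r>p$, a constant $C_{\e,\al,r}>0$ with
\[
0\le f(t)\le \e\,t^{p-1}+C_{\e,\al,r}\,t^{r-1}\Phi_\al(t),\qquad
0\le F(t)\le \e\,t^{p}+C_{\e,\al,r}\,t^{r}\Phi_\al(t),\quad t\ge 0,
\]
the polynomial prefactor $t^r$ being inserted so that H\"older's inequality can be coupled with the singular Moser--Trudinger estimate of Theorem \ref{thm1}.

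For tightness at infinity I fix $\e>0$ and select $\al>0$ together with a H\"older conjugate pair $(t,t')$ so that $t'\al<(1-\ba/N)\al_{N,s_1}$, which keeps the supremum in Theorem \ref{thm1} applicable to $\Phi_\al(u_n)^{t'}$. Setting $A_n^\tau=\{|u_n|\ge\tau\}$, the $L^p$-bound on $u_n$ makes $|A_n^\tau|\le M^p/\tau^p$ uniformly bounded in $n$, so hypothesis \eqref{eqK} provides $R_\e>0$ with $\int_{A_n^\tau\cap B_{R_\e}^c}K\,dx<\e$ for every $n$. On the complementary low-value set $B_{R_\e}^c\setminus A_n^\tau$ I use the small-$t$ estimate $F(t)\le \e t^p$ from (f1) together with the embedding $X\hookrightarrow L^p(\mb R^N;|x|^{-\ba})$ of Remark \ref{rem1}(A); on the high-value set $A_n^\tau\cap B_{R_\e}^c$ I insert the second inequality from Step 2 and split via H\"older so that one factor is $\int_{A_n^\tau\cap B_{R_\e}^c} K\,dx<\e$ and the other is an $L^{t'}$-moment of $|x|^{-\ba/t'}\Phi_\al(u_n)u_n^r$ uniformly bounded by Theorem \ref{thm1} combined with the embedding into $L^{rt'}(\mb R^N;|x|^{-\ba})$.

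For local equi-integrability on a fixed ball $B_R$, Remark \ref{rem1}(B) yields $u_n\to u$ strongly in $L^m(B_R;|x|^{-\ba})$ for every $m>p$, while Theorem \ref{thm1} uniformly bounds $\int_{B_R}|x|^{-\ba}\Phi_{\al'}(u_n)$ whenever $\al'<(1-\ba/N)\al_{N,s_1}$; a further H\"older application together with the Step 2 bound then shows that $\int_E K|x|^{-\ba}F(u_n)$ can be made arbitrarily small uniformly in $n$ whenever $E\subset B_R$ has sufficiently small Lebesgue measure. Combining a.e.\ convergence, tightness and this equi-integrability, Vitali's theorem delivers the first limit, and the remaining two limits follow mutatis mutandis. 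The principal obstacle is the tightness step: the parameter $\al$ must simultaneously be small enough that every H\"older-inflated exponent $t'\al$ stays strictly below the singular Moser--Trudinger threshold $(1-\ba/N)\al_{N,s_1}$, and large enough that the growth bound still produces estimates that are useful when $\{u_n\}$ later appears as a Cerami sequence for $\mc J$.
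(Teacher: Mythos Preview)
Your strategy is essentially the paper's: a.e.\ convergence, tightness at infinity via hypothesis~\eqref{eqK}, and local control via the compact embedding of Remark~\ref{rem1}(B) together with the Moser--Trudinger bound of Theorem~\ref{thm1}. The paper organizes the pointwise bound slightly differently, writing
\[
|K(x)F(t)|\le \e C_0\big(|t|^p+\Phi_\al(t)\big)+CK(x)\,\chi_{[\rho_0,\rho_1]}(|t|)\,|t|^\de,
\]
so that the $K$-vanishing is applied only on the \emph{bounded} level set $\{\rho_0\le |u_n|\le \rho_1\}$ (where $|u_n|^\de\le\rho_1^\de$ trivializes the H\"older step), while the large-$|t|$ regime is absorbed directly into $\e\,\Phi_\al(t)$. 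Your two-region split $\{|u_n|<\tau\}\cup\{|u_n|\ge\tau\}$ with a H\"older separation of $K$ on the second set is a legitimate variant, though it forces you to carry the exponential factor through an extra H\"older inequality.

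There is, however, one genuine inaccuracy. Theorem~\ref{thm1} gives a uniform bound only over functions with $\|u\|_{s,p,\tau}\le 1$; it does \emph{not} directly control $\int|x|^{-\ba}\Phi_\al(u_n)$ when $\|u_n\|\le M$ with $M$ possibly large. The paper handles this by the rescaling identity $\Phi_\al(u_n)\le \Phi_{\al M^{N/(N-s_1)}}(u_n/\|u_n\|)$ and then requires
\[
\al\,M^{N/(N-s_1)}<\Big(1-\frac{\ba}{N}\Big)\al_{N,s_1},
\]
not merely $t'\al<(1-\ba/N)\al_{N,s_1}$ as you wrote. In the subcritical case this is harmless---(f2) holds for every $\al>0$, so you may simply shrink $\al$ further---but the condition you stated is not the correct one, and without the normalization step the appeal to Theorem~\ref{thm1} is unjustified. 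Relatedly, your closing remark that $\al$ must be ``large enough'' is misplaced here: under (f1)--(f3) there is no lower constraint on $\al$, and the tension you describe arises only in the critical case treated in Corollary~\ref{cor1}, where one needs $\al>\al_0$ for the growth bound and the resolution comes instead from the smallness hypothesis $\sup_n\|v_n\|<1$.
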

\begin{proof}
	Set $M=\sup_n \|u_n\|$. For $0<\al <\big(1-\frac{\ba}{N}\big)\frac{\al_{N,s_1}}{M^{N/(N-s_1)}}$, from (f1) and (f2), we get 
	\begin{align*}
	\limsup_{t\ra \infty} \frac{f(t)t}{\Phi_\al(t)}= \limsup_{t\ra \infty} \frac{F(t)}{\Phi_\al(t)} =0, \; \mbox{and } \limsup_{t\ra 0} \frac{f(t)t}{|t|^p}=\limsup_{t\ra 0} \frac{F(t)}{|t|^p}=0.
	\end{align*}
	Therefore, for $\e>0$ and $\de>p$, there exist $\rho_0=\rho_0(\e)$, $\rho_1=\rho_1(\e)$ with $0<\rho_0<\rho_1$, $C=C_\e>0$ and $C_0>0$ depending only on $K$, such that for all $x\in\mb R^N$ and $t\in\mb R$, the following hold
	\begin{align}\label{eqKF}
	&|K(x)F(t)| \le \e C_0 \big(|t|^p+\Phi_\al(t) \big)+ CK(x) \chi_{[\rho_0,\rho_1]}(|t|) |t|^\de  \nonumber \\
	&|K(x)f(t)t| \le \e C_0 \big(|t|^p+\Phi_\al(t) \big)+ CK(x) \chi_{[\rho_0,\rho_1]}(|t|) |t|^\de.
	\end{align}
	By the embedding results of $X$ into $L^m(\mb R^N)$ (and hence into $L^m(\mb R^N; |x|^{-\ba})$, for $0\le\ba<N$), we have 
	\begin{align}\label{eq33}
	\sup_n\int_{\mb R^N}\frac{|u_n|^p}{|x|^\ba}\le \tilde{M},
	\end{align}
	for some $\tilde{M}\ge M>0$. Now, for $\al<\big(1-\frac{\ba}{N}\big)\frac{\al_{N,s_1}}{M^{N/(N-s_1)}}$, we have 
	\begin{align*}
	\al \|u_n\|^{N/(N-s_1)}\le \al M^{N/(N-s_1)}< \big(1-\ba/N)\al_{N,s_1},
	\end{align*}
	therefore, by Theorem \ref{thm1} and the fact that $\Phi_\al$ is increasing with respect to $\al$, we obtain 
	\begin{align}\label{eq34}
	\sup_n\int_{\mb R^N} \frac{\Phi_\al(u_n)}{|x|^\ba} \le \sup_n \int_{\mb R^N} \frac{\Phi_{\al M^{N/(N-s_1)}}(\frac{u_n}{\|u_n\|})}{|x|^\ba}\le \tilde{M}.
	\end{align} 
	Let 	$ A_\e^n:=\{x\in\mb R^N : \rho_0\le |u_n(x)|\le \rho_1 \}$.
	Then, as $\{|A_\e^n| \}$ is bounded w.r.t. $n$, using \eqref{eqK}, we deduce that 
	\begin{align*}
	\lim_{r\ra\infty} \Big| \int_{A_\e^n \cap B_{r}^c(0)} \frac{K(x)}{|x|^\ba}dx \Big|\le \lim_{r\ra\infty} \frac{1}{r^\ba} \Big|\int_{A_\e^n \cap B_{r}^c(0)} K(x)dx\Big| =0, \mbox{ uniformly w.r.t. }n\in\mb N.
	\end{align*}
	Therefore, for $\e>0$, there exits $R_\e>0$ such that 
	\begin{align}\label{eq35}
	\int_{A_\e^n \cap B_{R_\e}^c(0)} \frac{K(x)}{|x|^\ba}dx <\frac{\e}{C \rho_1^\de} \; \; \mbox{for all }n\in\mb N.
	\end{align}
	Taking into account \eqref{eqKF} through \eqref{eq35}, we obtain
	\begin{align}\label{eq36}
	&\int_{B_{R_\e}^c(0)} \frac{K(x)F(u_n(x))}{|x|^\ba}dx \le 2C_0 \tilde{M}\e + C\rho_1^\de \int_{A_\e^n \cap B_{R_\e}^c(0)} \frac{K(x)}{|x|^\ba}dx < (2C_0 \tilde{M}+1)\e, \nonumber \\
	&\int_{B_{R_\e}^c(0)} \frac{K(x)f(u_n(x))u_n(x)}{|x|^\ba}dx \le 2C_0 \tilde{M}\e + C\rho_1^\de \int_{A_\e^n \cap B_{R_\e}^c(0)} \frac{K(x)}{|x|^\ba}dx < (2C_0 \tilde{M}+1)\e,
	\end{align}
	for all $n\in\mb N$. 
	Furthermore, from (f1) and (f2), it is easy to observe that 
	\[ |f(t)|\le C_1\big( |t|^p+\Phi_\al(t) \big), \mbox{ for all }t\in\mb R,  \]
	where $C_1>0$ is a constant. Therefore, using the fact that $K\in L^\infty(\mb R^N)$, we get
	\begin{align*}
	\Bigg| \int_{B_{R_\e}(0)} \frac{K(x)f(u_n)(u_n-u)}{|x|^\ba}dx \Bigg| \le C \left( \int_{B_{R_\e}(0)} \frac{|u_n|^p|u_n-u|}{|x|^\ba} +\int_{B_{R_\e}(0)} \frac{\Phi_\al(u_n)|u_n-u|}{|x|^\ba}\right).
	\end{align*}
	We choose $\ga>1$ close to $1$ such that $\ga'>p$ and $\ga\al < \big(1-\frac{\ba}{N}\big)\frac{\al_{N,s_1}}{M^{N/(N-s_1)}}$. Then, using H\"older inequality, Theorem \ref{thm1} and the fact that $\{ \|u_n\| \}$ is bounded, we deduce that 
	\begin{align*}
	\Bigg| \int_{B_{R_\e}(0)} \frac{K(x)f(u_n)(u_n-u)}{|x|^\ba}dx \Bigg| \le &C \left[\left( \int_{\mb R^N} \frac{|u_n|^{\ga p} }{|x|^\ba} \right)^{1/\ga} +\left( \int_{\mb R^N} \frac{\Phi_\al(u_n)^\ga}{|x|^\ba}\right)^{1/\ga} \right] \\
	&\quad \times\left( \int_{B_{R_\e}(0)} \frac{|u_n-u|^{\ga'} }{|x|^\ba} \right)^{1/\ga'} \\
	& \le C \left( \int_{B_{R_\e}(0)} \frac{|u_n-u|^{\ga'} }{|x|^\ba} \right)^{1/\ga'}  \ra 0 \; \mbox{ as }n\ra\infty,
	\end{align*}
	where in the last line we have used the compact embedding result of  $X$ given in Remark \ref{rem1}. Hence, 
	\begin{align*}
	\lim_{n\ra\infty}\int_{B_{R_\e}(0)} \frac{K(x)f(u_n)u_n}{|x|^\ba}dx =\int_{B_{R_\e}(0)} \frac{K(x)f(u_n)u_n}{|x|^\ba}dx.
	\end{align*}
	Using (f3), one can easily verify that $ pF(t)\le f(t)t$ for all $t\in\mb R^N$. Therefore, by generalized Lebesgue dominated convergence theorem, similar convergence result holds for $F$ also. Thus, using \eqref{eq36}, we get the required convergence result of the first two integrals of the Lemma.\\ 
	Next, to prove the last convergence result of the Lemma, we set $E_n:=\{ x\in\mb R^N: |u_n(x)|\le 1\}$ and $E:=\{ x\in\mb R^N: |u(x)|\le 1\}$. We first claim that 
	the sequence $\{K(x)f(u_n)\chi_{E_n} \}$ is uniformly bounded in $L^{r'}(\mb R^N; |x|^{-\ba})$, where $r'$ is the H\"older conjugate of $r$. By (f1), it is easy to see that $|f(t)|\le C|t|^{p-1}$ for all $|t|\le 1$ and some $C>0$. Therefore,
	\[ |f(u_n)|\le C|u_n|^{p-1} \mbox{ in } E_n, \mbox{ for all }n\in\mb N.  \]
	Using the fact that $X\hookrightarrow L^p(\mb R^N; |x|^{-\ba})$ is continuous and $\{\|u_n\| \}$ is bounded, we obtain
	\begin{align*}
	\int_{E_n} \frac{|K(x)f(u_n)|^{p'}}{|x|^\ba} \le C\int_{E_n} \frac{|u_n|^p}{|x|^\ba}\le C\|u_n\|^p\le C, \mbox{ for all }n\in\mb N.
	\end{align*}
	This together with the pointwise convergence gives us
	\begin{align*}
	\lim_{n\ra\infty}\int_{E_n} \frac{K(x)f(u_n)\phi}{|x|^\ba}dx =\int_{E} \frac{K(x)f(u_n)\phi}{|x|^\ba}dx \;\; \mbox{ for all }\phi\in L^p(\mb R^N;|x|^{-\ba}).
	\end{align*}
	Now, for any $v\in X$, we have $v\in L^r(\mb R^N;|x|^{-\ba})$ and hence
	\begin{align*}
	\lim_{n\ra\infty}\int_{E_n} \frac{K(x)f(u_n)v}{|x|^\ba}dx =\int_{E} \frac{K(x)f(u_n)v}{|x|^\ba}dx.
	\end{align*}
	Similarly, by (f2), for $m\ge 1$, we obtain 
	\begin{align*}
	|f(u_n(x))|^m\le C  \Phi_\al(u_n(x))^m\le C \Phi_{m\al}(u_n(x)) \; \; \mbox{ for }x\in E_n^c \ \mbox{ and for all }n\in\mb N. 
	\end{align*}
	We choose $m>1$ close to $1$ such that $m'>p$ and $m\al<\big(1-\frac{\ba}{N}\big)\frac{\al_{N,s_1}}{M^{N/(N-s_1)}}$. Then, by Theorem \ref{thm1}, we get 
	\[ \int_{E_n^c} \frac{|K(x)f(u_n)|^m}{|x|^\ba} \; \mbox{ is uniformly  bounded}.   \]
	Therefore, for $v\in X$, we have $v\in L^{m'}(\mb R^N;|x|^{-\ba})$ and pointwise convergence yields
	\begin{align*}
	\lim_{n\ra\infty}\int_{E_n^c} \frac{K(x)f(u_n)v}{|x|^\ba}dx =\int_{E^c} \frac{K(x)f(u_n)v}{|x|^\ba}dx.
	\end{align*}
	This completes proof of the lemma. \QED
\end{proof}
Without loss of generality, we may assume $\al_0=\big(1-{\ba/N}\big)\al_{N,s_1}$, appearing in (f2)$'$. Then, we have similar compactness result if the conditions (f1), (f2)$'$ and (f3)$'$ hold, that is, the critical case.
 \begin{Corollary}\label{cor1}
 	Let $\{v_n\}\subset X$ be a sequence such that $v_n\rightharpoonup v$ weakly in $X$, for some $v\in X$ and
 	\[ L:=\sup_n \|v_n\|\in(0,1). \]
 	Then, the convergence results of the Lemma \ref{cmp} are true in this case also.
 \end{Corollary}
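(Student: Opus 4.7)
The plan is to mimic the proof of Lemma \ref{cmp} verbatim, the only essential change being how we extract a uniform bound on $\int_{\mathbb R^N}\Phi_\al(v_n)/|x|^\ba\,dx$. In the subcritical setting this was free: (f2) allowed us to compare $f$ with $\Phi_\al$ for \emph{any} $\al>0$, so one simply chose $\al$ so small that $\al M^{N/(N-s_1)}<(1-\ba/N)\al_{N,s_1}$ and invoked Theorem~\ref{thm1}. Under (f2)$'$ this comparison is only available for $\al>\al_0=(1-\ba/N)\al_{N,s_1}$, so the hypothesis $L=\sup_n\|v_n\|<1$ becomes indispensable: it provides room to pick $\al$ slightly above $\al_0$ while still keeping $\al L^{N/(N-s_1)}<\al_0$, which is exactly what Theorem~\ref{thm1} requires for a uniform Moser--Trudinger bound on $\{v_n\}$.

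First I would fix once and for all a parameter $\al>\al_0$ with
$\al L^{N/(N-s_1)}<\al_0=(1-\ba/N)\al_{N,s_1}$; such $\al$ exists precisely because $L<1$. With this $\al$, (f1) and (f2)$'$ yield exactly the same pointwise inequalities as \eqref{eqKF}, namely
\begin{align*}
|K(x)F(t)|+|K(x)f(t)t| \le \e C_0\big(|t|^p+\Phi_\al(t)\big) +C K(x)\chi_{[\rho_0,\rho_1]}(|t|)|t|^\de,
\end{align*}
and Theorem~\ref{thm1} applied to $v_n/\|v_n\|$ furnishes the critical uniform bound
\begin{align*}
\sup_n\int_{\mb R^N}\frac{\Phi_\al(v_n)}{|x|^\ba}\,dx \le \sup_n\int_{\mb R^N}\frac{\Phi_{\al L^{N/(N-s_1)}}(v_n/\|v_n\|)}{|x|^\ba}\,dx<\infty.
\end{align*}
From here, the splitting into $B_{R_\e}(0)$ and its complement, the use of hypothesis \eqref{eqK} on $K$ to control the tail, and the use of the compact embedding $X\hookrightarrow L^m(B_{R_\e}(0);|x|^{-\ba})$ from Remark~\ref{rem1} on the bounded piece all carry over with no change, giving the first two convergences.

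For the third convergence, tested against an arbitrary $v\in X$, I would repeat the $E_n=\{|v_n|\le 1\}$ vs.\ $E_n^c$ decomposition of Lemma~\ref{cmp}. On $E_n$, (f1) still yields $|f(v_n)|\le C|v_n|^{p-1}$, so that piece is unaffected. On $E_n^c$ I need $\int_{E_n^c}|K(x)f(v_n)|^m/|x|^\ba$ to be uniformly bounded for some $m>1$ with H\"older conjugate $m'>p$; to this end I refine the choice of $\al$ above, requiring in addition $m\al L^{N/(N-s_1)}<\al_0$ with $m>1$ close to $1$. Again this is possible precisely because $L<1$, and then Theorem~\ref{thm1} applied to $v_n/\|v_n\|$ gives the required uniform bound, after which pointwise convergence together with $v\in L^{m'}(\mb R^N;|x|^{-\ba})$ concludes the argument.

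The only genuinely new step compared to Lemma~\ref{cmp} is the simultaneous choice of $\al>\al_0$ and of the H\"older exponent $m>1$ so that $m\al L^{N/(N-s_1)}$ remains below $\al_0$; this is the single point at which the hypothesis $L<1$ is used, and it is the main (though mild) obstacle. Once that choice is in place, every other estimate is a line-by-line transcription of the proof of Lemma~\ref{cmp}. \QED
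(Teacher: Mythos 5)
Your proposal is correct and follows essentially the same route as the paper: pick $\al>\al_0$ with $\al L^{N/(N-s_1)}<\al_0=(1-\ba/N)\al_{N,s_1}$ (possible since $L<1$), use (f2)$'$ in place of (f2) to obtain the analogue of \eqref{eqKF}, apply Theorem~\ref{thm1} to $v_n/\|v_n\|$ for the uniform Moser--Trudinger bound, and then rerun Lemma~\ref{cmp}. The paper states this more tersely (``rest of the proof follows similar to that of the Lemma with $\al$ replaced by $\al_L$''), while you usefully make explicit the additional room needed to choose the H\"older exponent $m>1$ with $m\al L^{N/(N-s_1)}<\al_0$ for the third convergence, which is the same point the Lemma's proof handles implicitly.
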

 \begin{proof}
 	Since $L\in(0,1)$, there exists $\al_L> \al_{N,s_1}\big(1-\frac{\ba}{N}\big)$ such that $\al_L< \big(1-\frac{\ba}{N}\big)\frac{\al_{N,s_1}}{L^{N/(N-s_1)}}$. Then, by (f1) and (f2)$'$, results similar to \eqref{eqKF} hold in this case too, with $\al$ replaced by $\al_L$. Furthermore, Theorem \ref{thm1} can be applied to obtain boundedness results as in \eqref{eq33} and \eqref{eq34}. Now, rest of the proof follows similar to that of the Lemma with $\al$ replaced by $\al_L$. \QED
 \end{proof}
 It is easy to verify that the functional $\mc J$ is of class $C^1(X)$. Now, we verify the mountain pass geometry for $\mc J$.
 \begin{Lemma}
 	The functional $\mc J$ satisfies the following
 	\begin{enumerate}
 	 \item[(I)] there exists $v_0\in X\setminus\{0\}$ with $\|v_0\| \ge 2$ such that $\mc J(v_0)<0$.
 	 \item[(II)] There exists $\eta>0$ and $\rho\in(0,1)$ such that $\mc J(v)\ge \eta$ for all $v\in X$ with $\|v\|=\rho$.
 	\end{enumerate}
 \end{Lemma}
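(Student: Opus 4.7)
The strategy is standard for mountain pass geometry: near zero the positive part of $\mc J$ dominates the nonlinear part via the Moser--Trudinger bound of Theorem \ref{thm1}, while along any nonzero ray $tu_0$ the nonlinearity eventually overwhelms the polynomial terms.

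\emph{Part (II) (barrier at small $\|v\|$).} From (f1) one has $|f(t)|\le \e |t|^{p-1}$ near $0$, and from (f2) (respectively (f2)$'$) one has $|f(t)|\le \Phi_\al(t)$ for $|t|$ large (any $\al>0$, respectively any $\al>\al_0$). Combining with boundedness of $f$ on compact intervals, I would first establish
\[
|F(t)| \le \frac{\e}{p}|t|^p + C_\e |t|^{r}\Phi_\al(t), \qquad t\in\mb R,
\]
where $r>0$ is at our disposal. For $\|v\|=\rho\in(0,1)$ to be determined, I apply H\"older's inequality with exponent $\gamma>1$ close to $1$ (chosen so that $r\gamma'>p$):
\[
\int_{\mb R^N}\frac{K(x)|v|^r\Phi_\al(v)}{|x|^\ba}dx \le \|K\|_\infty \left(\int_{\mb R^N}\frac{\Phi_{\gamma\al}(v)}{|x|^\ba}\right)^{1/\gamma}\left(\int_{\mb R^N}\frac{|v|^{r\gamma'}}{|x|^\ba}\right)^{1/\gamma'}.
\]
If $\rho$ is small enough that $\gamma\al\rho^{N/(N-s_1)}<(1-\ba/N)\al_{N,s_1}$, Theorem \ref{thm1} (applied to $v/\|v\|_{s_1,p,V_0}$, using $\|v\|_{s_1,p,V_0}\le\|v\|_{s_1,p}\le\|v\|$ thanks to (i)) gives a uniform bound on the first factor, while Remark \ref{rem1}(A) controls the second by $C\|v\|^{r}$.

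For the positive part of $\mc J$, since $q<p$ and $\|v\|_{s_2,q}\le\|v\|<1$ forces $\|v\|_{s_2,q}^q\ge\|v\|_{s_2,q}^p$, the inequality $(a+b)^p\le 2^{p-1}(a^p+b^p)$ yields
\[
\frac{1}{p}\|v\|_{s_1,p}^p+\frac{1}{q}\|v\|_{s_2,q}^q \ge \frac{1}{p}\bigl(\|v\|_{s_1,p}^p+\|v\|_{s_2,q}^p\bigr) \ge \frac{2^{1-p}}{p}\|v\|^p.
\]
Combining everything, $\mc J(v)\ge\bigl(2^{1-p}/p - C_1\e\bigr)\|v\|^p - C_2\|v\|^r$; choosing $\e$ small so the first coefficient is positive and then shrinking $\rho$ if necessary gives $\mc J(v)\ge\eta>0$ on $\|v\|=\rho$.

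\emph{Part (I) (negative value far from zero).} Fix any $u_0\in C_c^\infty(\mb R^N)\cap X$ with $u_0\ge 0$ and $u_0\not\equiv 0$. For $t>0$, the polynomial part of $\mc J(tu_0)$ is $O(t^p)$ because $q<p$. In the subcritical case, (f3) gives $F(s)/s^p\to\infty$, so Fatou's lemma on the set $\{u_0>0\}$ yields
\[
\liminf_{t\to\infty} \frac{1}{t^p}\int_{\mb R^N}\frac{K(x)F(tu_0)}{|x|^\ba}\,dx \ge \int_{\{u_0>0\}}\frac{K(x)u_0^p}{|x|^\ba}\liminf_{t\to\infty}\frac{F(tu_0(x))}{(tu_0(x))^p}\,dx = +\infty,
\]
so $\mc J(tu_0)/t^p\to-\infty$. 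In the critical case, (f3)$'$ gives $F(tu_0)\ge C_\de t^\de u_0^\de$ with $\de>p$, so the nonlinear part is at least $C_\de t^\de\int Ku_0^\de/|x|^\ba$, dominating the $t^p$ polynomial term. Either way $\mc J(tu_0)\to-\infty$ as $t\to\infty$, and we pick $t$ large enough that both $\|tu_0\|\ge 2$ and $\mc J(tu_0)<0$; then set $v_0:=tu_0$.

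\emph{Main obstacle.} The most delicate point is uniformity of the Moser--Trudinger bound along the sphere $\|v\|=\rho$: Theorem \ref{thm1} is stated for the norm $\|\cdot\|_{s,p,\tau}$ built only from the Gagliardo seminorm and a multiple of $\|\cdot\|_p^p$, whereas $\|\cdot\|$ involves the larger weighted quantity $\int V|u|^p$ and is also summed with the $(s_2,q)$-norm. The chain $\|v\|_{s_1,p,V_0}\le\|v\|_{s_1,p}\le\|v\|$, valid by (i), bridges these norms. In the critical case the additional constraint $\al>\al_0=(1-\ba/N)\al_{N,s_1}$ (using the normalization adopted before Corollary \ref{cor1}) forces $\rho$ to be genuinely small, but Theorem \ref{thm1} still applies because $\gamma\al\rho^{N/(N-s_1)}$ can be driven below $(1-\ba/N)\al_{N,s_1}$ by first shrinking $\rho$ and then choosing $\gamma-1$ sufficiently small.
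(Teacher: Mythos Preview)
Your argument is correct and follows essentially the same route as the paper: estimate $F$ by a small multiple of $|t|^p$ plus $|t|^r\Phi_\al(t)$, control the latter on the sphere $\|v\|=\rho$ via H\"older and Theorem~\ref{thm1}, and use $\|v\|_{s_1,p}^p+\|v\|_{s_2,q}^q\ge 2^{1-p}\|v\|^p$ for $\|v\|<1$; your discussion of the norm comparison needed to invoke Theorem~\ref{thm1} is in fact more explicit than the paper's. The only slip is that you must fix $r>p$ (not merely ``$r>0$ at our disposal'', and note that the requirement $r\gamma'>p$ with $\gamma'$ large does not enforce this) so that $c\rho^p-C_2\rho^r>0$ for small $\rho$; the paper takes $r=\de>p$ from the outset.
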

 \begin{proof}
 	(I) Proof of this part is a standard procedure and follows by the super linear nature of the nonlinearity $F$ with respect to $p$. \\
 	(II) For fixed $\rho_0\in(0,1)$, we choose $\al>0$ such that $ \al_{N,s_1}\big(1-\frac{\ba}{N}\big) <\al <\big(1-\frac{\ba}{N}\big)\frac{\al_{N,s_1}}{\rho_0^{N/(N-s_1)}}$. Now, by the fact that $K\in L^\infty(\mb R^N)$, (f1) and (f2) (or (f2)$'$), for $\de>p$, we have
 	\begin{align}\label{eq37}
 		K(x) F(t)\le \frac{1}{2^p p C_p^p} t^p + C_2 \Phi_\al(t) t^\de \; \mbox{ for all }t\in\mb R^+ \; \mbox{and }x\in\mb R^N,
 	\end{align}
 	where $C_2>0$ is a constant and $C_p$ appears in Remark \ref{rem1} (A). We choose $m>1$ close to $1$ such that $m'>p$ and $m\al< \big(1-\frac{\ba}{N}\big)\frac{\al_{N,s_1}}{\rho_0^{N/(N-s_1)}}$, then by Theorem \ref{thm1} and the embedding of $X$ into $L^\ga(\mb R^N;|x|^{-\ba})$, for $\ga\ge p$, we obtain 
 	\begin{align}\label{eq38}
 	  \int_{\mb R^N} \frac{\Phi_\al(w)|w|^\de}{|x|^\ba} \le \left(\int_{\mb R^N} \frac{|\Phi_\al(w)|^m}{|x|^\ba} \right)^{1/m} \left( \int_{\mb R^N} \frac{|w|^{\de m'}}{|x|^\ba} \right)^{1/m'} &\le C_3 \left(\int_{\mb R^N} \frac{\Phi_{m\al}(w)}{|x|^\ba} \right)^{1/r} \|w\|^\de \nonumber \\
 	  &\le C_4 \|w\|^\de,
 	\end{align}
  	for all $w\in X$ with $\|w\|=\rho\le \rho_0$, where $C_3,C_4>0$ are constants independent of $w$. Therefore, using \eqref{eq37}, \eqref{eq38} and Remark \ref{rem1} (A), we deduce that
 	\begin{align*}
 	  \mc J(w) &\ge \frac{1}{p} \|w\|_{s_1,p}^p+ \frac{1}{q} \|w\|_{s_2,q}^q- \frac{1}{p2^p C_p^p} \int_{\mb R^N} \frac{|w|^{p}}{|x|^\ba} - C_2 \int_{\mb R^N} \frac{\Phi_\al(w)|w|^\de}{|x|^\ba} \\
 	  &\ge \frac{2^{1-p}}{p} \|w\|^p -\frac{1}{p2^p C_p^p} C_p^p  \|w\|^p -C_4 \|w\|^\de= \frac{2^{-p}}{p} \|w\|^p-C_4 \|w\|^\de,
 	\end{align*}
 	where we have used the fact that $\|w\|_{s_1,p}, \|w\|_{s_2,q} \le \|w\| <1$ and $C_i$'s are positive constants. By the fact that $\de>p$, there exists $\eta>0$ and $\rho$ small enough such that $\mc J(w)\ge \eta$ for all $w\in X$ with $\|w\|=\rho$. \QED
 \end{proof}
 The mountain pass lemma ensures the existence of a Cerami sequence at the mountain pass level, that is, there exists a sequence $\{u_n\}\subset X$ such that
 \begin{align*}
 	\mc J(u_n)\ra c \; \; \mbox{ and } (1+\|u_n\|) \|\mc J^\prime(u_n) \| \ra 0, \; \; \mbox{as }n\ra\infty,
 \end{align*}
 where $c:= \ds\inf_{g\in\Ga} \max_{t\in[0,1]} \mc J(g(t))$ with $\Ga= \{ g\in C([0,1],X) : g(0)=0 \; \mbox{and }\mc J(g(1))<0 \}$.
 \begin{Lemma}\label{non}
 	Every solution $u$ of $(\mc P)$ is nonnegative and if $\{u_n \}\subset X$ is a Cerami sequence, then $\|u_n^-\| \ra 0$, as $n\ra\infty$.
 \end{Lemma}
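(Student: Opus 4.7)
The plan is to test the weak formulation with $v=-u^-$ (respectively $v=-u_n^-$ in the Cerami case), exploiting that the nonlinear term disappears thanks to (f1) while the remaining terms are controlled from below by $\|u^-\|$-type quantities. For this to work I need (a) that $-u^-\in X$ whenever $u\in X$, which follows from the pointwise bound $u^-\le|u|$ and the $1$-Lipschitz property $|u^-(x)-u^-(y)|\le|u(x)-u(y)|$, and (b) the pointwise algebraic inequality
\[
|a-b|^{p-2}(a-b)\bigl(a^- - b^-\bigr)\le -\bigl|a^- - b^-\bigr|^p\quad\text{for all }a,b\in\mb R,
\]
whose elementary case analysis ($a,b\ge 0$; $a,b\le 0$; $a\ge 0>b$) is standard. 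Plugging this into the definition of $\mc A_1$ (and its $(q,s_2)$-analogue for $\mc A_2$) yields
\[
\mc A_1(u,-u^-)\ge [u^-]_{s_1,p}^{\,p},\qquad \mc A_2(u,-u^-)\ge [u^-]_{s_2,q}^{\,q}.
\]

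Next I compute the potential contribution. On $\{u\ge 0\}$ one has $u^-=0$, and on $\{u<0\}$ the identity $|u|^{r-2}u\cdot(-u^-)=(u^-)^r$ holds for $r\in\{p,q\}$; thus
\[
\int_{\mb R^N}V(x)\bigl(|u|^{p-2}u+|u|^{q-2}u\bigr)(-u^-)\,dx=\int_{\mb R^N}V(x)\bigl((u^-)^p+(u^-)^q\bigr)\,dx.
\]
The nonlinear term vanishes identically: on $\{u\ge 0\}$ the factor $u^-$ is zero, and on $\{u<0\}$ assumption (f1) gives $f(u)=0$. Combining these facts, for any $u\in X$,
\[
\|u^-\|_{s_1,p}^{\,p}+\|u^-\|_{s_2,q}^{\,q}\le \langle \mc J'(u),-u^-\rangle.
\]

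To conclude the first assertion, if $u$ solves $(\mc P)$ then $\mc J'(u)=0$, so the right-hand side vanishes and $\|u^-\|_{s_1,p}=\|u^-\|_{s_2,q}=0$, giving $u\ge 0$ a.e. For the Cerami assertion, the condition $(1+\|u_n\|)\|\mc J'(u_n)\|_{X'}\to 0$ together with the bound $\|u_n^-\|\le\|u_n\|$ (which follows from the Lipschitz property above applied to both seminorms and to the weighted $L^p,L^q$ norms) yields
\[
\|u_n^-\|_{s_1,p}^{\,p}+\|u_n^-\|_{s_2,q}^{\,q}\le \|\mc J'(u_n)\|_{X'}\,\|u_n^-\|\le \frac{\epsilon_n\,\|u_n^-\|}{1+\|u_n\|}\le \epsilon_n\longrightarrow 0.
\]
Hence $\|u_n^-\|_{s_1,p}\to 0$ and $\|u_n^-\|_{s_2,q}\to 0$, and since $\|u_n^-\|=\|u_n^-\|_{s_1,p}+\|u_n^-\|_{s_2,q}$, the conclusion $\|u_n^-\|\to 0$ follows.

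The only genuinely nontrivial ingredient is the pointwise fractional-Laplacian inequality, which is classical; the unbalanced $(p,q)$-structure is harmless because the two nonlocal quadratic-form estimates add independently and the crucial absorbing factor $(1+\|u_n\|)^{-1}$ in the Cerami condition exactly compensates the possible growth of $\|u_n^-\|$, so that no assumption of boundedness on the Cerami sequence is needed.
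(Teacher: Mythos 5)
Your proposal is correct and follows essentially the same approach the paper sketches: test the weak formulation against $-u^-$, use a pointwise inequality comparing increments of $u$ and $u^-$ to bound the fractional forms, exploit (f1) to kill the nonlinear term, and close via the Cerami condition. You spell out the pointwise inequality $|a-b|^{p-2}(a-b)(a^--b^-)\le-|a^--b^-|^p$ and verify it directly in all cases (including $1<q<2$), which is a clean way to fill in the details that the paper delegates to \cite[Lemma 2.9]{miyPu}.
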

\begin{proof}
  Proof follows using the inequalities,
	\begin{align*}
		(u(x)-u(y)) (u^-(x)-u^-(y)) \le -|u^-(x)-u^-(y)|^2 \; \mbox{ and } |u(x)-u(y)| \ge |u^-(x)-u^-(y)|.
	\end{align*}
  Applying these one can deduce that $\mc A_2(u,u^-)+\int_{\mb R^N} V(x)|u|^{q-2}u u^-\le 0$. Then, rest of the proof follows similar to \cite[Lemma 2.9]{miyPu}.\QED
\end{proof}
Following the standard procedure, we can prove the following result.
 \begin{Lemma}
 	Suppose the function $f$ satisfies (f1), (f3)$'$ and (AR). Then, any Cerami sequence of $\mc J$ at level $c$ is bounded.
 \end{Lemma}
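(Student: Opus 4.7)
The plan is to argue by contradiction, supposing that $\|u_n\|\to\infty$ along a subsequence, and then use the Ambrosetti–Rabinowitz device of combining $\mc J(u_n)$ with a multiple of $\langle\mc J'(u_n),u_n\rangle$. Concretely, I would compute $\mc J(u_n)-\frac{1}{\nu}\langle\mc J'(u_n),u_n\rangle$ with $\nu>p$ from (AR), aiming to show that all of the leading terms on the right-hand side are controlled.

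For the computation, note that $\mc A_1(u_n,u_n)=[u_n]_{s_1,p}^p$ and $\mc A_2(u_n,u_n)=[u_n]_{s_2,q}^q$, so that
\begin{align*}
\langle\mc J'(u_n),u_n\rangle=\|u_n\|_{s_1,p}^p+\|u_n\|_{s_2,q}^q-\int_{\mb R^N}\frac{K(x)f(u_n)u_n}{|x|^\ba}\,dx.
\end{align*}
Plugging this into the difference and rearranging I would obtain
\begin{align*}
\mc J(u_n)-\tfrac{1}{\nu}\langle\mc J'(u_n),u_n\rangle
=\Bigl(\tfrac{1}{p}-\tfrac{1}{\nu}\Bigr)\|u_n\|_{s_1,p}^p+\Bigl(\tfrac{1}{q}-\tfrac{1}{\nu}\Bigr)\|u_n\|_{s_2,q}^q+\int_{\mb R^N}\frac{K(x)}{|x|^\ba}\Bigl[\tfrac{1}{\nu}f(u_n)u_n-F(u_n)\Bigr]dx,
\end{align*}
up to the additional nonnegative $V$-weighted terms appearing in the functional. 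By (f1) the function $f$ vanishes on $(-\infty,0]$, so the Ambrosetti–Rabinowitz inequality $\nu F(t)\le tf(t)$ holds pointwise for every $t\in\mb R$ (trivially for $t\le 0$ and by (AR) for $t>0$), making the last integral nonnegative.

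To close the argument I would control the left-hand side via the Cerami condition: $\mc J(u_n)\to c$, while $|\langle\mc J'(u_n),u_n\rangle|\le\|\mc J'(u_n)\|\,\|u_n\|\le (1+\|u_n\|)\|\mc J'(u_n)\|\to 0$. Thus the left-hand side is $c+o(1)$. Since $\nu>p>q$, both coefficients $\frac{1}{p}-\frac{1}{\nu}$ and $\frac{1}{q}-\frac{1}{\nu}$ are strictly positive, so $\|u_n\|_{s_1,p}^p$ and $\|u_n\|_{s_2,q}^q$ are each bounded. Consequently $\|u_n\|=\|u_n\|_{s_1,p}+\|u_n\|_{s_2,q}$ is bounded, contradicting the initial assumption.

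The only delicate point is the non-homogeneous nature of the leading operator, which means one cannot bound a single norm $\|u_n\|^r$ on the right — instead, one has a sum of two different powers. However, because the coefficients of both $\|u_n\|_{s_1,p}^p$ and $\|u_n\|_{s_2,q}^q$ are strictly positive and each of these quantities is nonnegative, boundedness of their sum yields boundedness of each summand separately, and the argument goes through without additional interpolation. The (AR) condition is precisely what is needed to absorb the critical-growth nonlinearity, which is otherwise the main source of trouble — this is why the hypothesis is dropped only in the subcritical case.
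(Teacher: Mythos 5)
Your proof is correct and is precisely the ``standard procedure'' the paper alludes to without spelling out; the same computation $\mc J(u_n)-\frac1\nu\langle\mc J'(u_n),u_n\rangle\ge(\frac1p-\frac1\nu)\|u_n\|_{s_1,p}^p+(\frac1q-\frac1\nu)\|u_n\|_{s_2,q}^q$ appears explicitly in the paper's Lemma \ref{lem2}. Two cosmetic remarks: the contradiction framing is unnecessary since you derive boundedness directly, and the hypothesis (f3)$'$ is listed in the lemma but not actually used in this argument (AR plus $f\equiv 0$ on $(-\infty,0]$ from (f1) suffice), which you correctly reflect by not invoking it.
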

\begin{Lemma}\label{lem2}
	Let the function $f$ satisfies (f1), (f2)$'$, (f3)$'$ and (AR). Then, for any Cerami sequence $\{u_n\}\subset X$ for $\mc J$ at the mountain pass level $c$, the following holds
	\[ \sup_{n\in\mb N} \|u_n\|\in(0,1),  \]
	if the constant $C_\de$, appearing in (f3)$'$, is sufficiently large.
\end{Lemma}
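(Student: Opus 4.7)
The plan is to make the mountain-pass level $c$ small by choosing $C_\delta$ large, and then to invoke the Ambrosetti--Rabinowitz condition to convert this smallness into the strict bound $\|u_n\|<1$. The lower bound $\sup_n\|u_n\|>0$ is immediate since $\mc J(u_n)\to c\ge\eta>0=\mc J(0)$, so only the upper bound needs work.

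For the upper bound on $c$, I would fix a nonzero, nonnegative test function $e\in C_c^\infty(\mb R^N)\subset X$ and set
\[
A:=\frac{1}{p}\|e\|_{s_1,p}^p,\quad B:=\frac{1}{q}\|e\|_{s_2,q}^q,\quad D:=\int_{\mb R^N}\frac{K|e|^\delta}{|x|^\ba}\,dx,
\]
all three of which are finite (using $\ba<N$ for $D$) and strictly positive. Hypothesis (f3)$'$ gives $F(te)\ge C_\delta t^\delta e^\delta$, so
\[
\mc J(te)\le h(t):=t^pA+t^qB-C_\delta t^\delta D\qquad\text{for all }t\ge 0.
\]
Since $\delta>p>q$, $h(t)\to-\infty$ as $t\to\infty$, so $\mc J(Te)<0$ for some $T>0$ and the rescaled path $s\mapsto sTe$, $s\in[0,1]$, is admissible in the definition of the mountain-pass level, giving $c\le\max_{t\ge 0}h(t)$. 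A positive maximizer $t_*$ of $h$ satisfies $\delta C_\delta D\,t_*^{\delta-q}=pA\,t_*^{p-q}+qB$; for $C_\delta$ large this forces $t_*\le 1$, whereupon $pAt_*^{p-q}\le pA$ yields $t_*^{\delta-q}\le(pA+qB)/(\delta C_\delta D)$, and hence
\[
c\le \max_{t\ge 0}h(t)\le (A+B)\left(\frac{pA+qB}{\delta C_\delta D}\right)^{q/(\delta-q)}=:\Psi(C_\delta)\xrightarrow[C_\delta\to\infty]{}0.
\]

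For the lower bound, the Cerami property gives $\mc J(u_n)-\frac{1}{\nu}\langle\mc J'(u_n),u_n\rangle=c+o(1)$. Expanding produces the two positive coefficients $\kappa_1:=\frac{1}{p}-\frac{1}{\nu}>0$ and $\kappa_2:=\frac{1}{q}-\frac{1}{\nu}>0$ in front of $\|u_n\|_{s_1,p}^p$ and $\|u_n\|_{s_2,q}^q$, together with the nonnegative reaction remainder $\int K|x|^{-\ba}\bigl(\tfrac{1}{\nu}f(u_n)u_n-F(u_n)\bigr)\,dx\ge 0$ from (AR); consequently
\[
\kappa_1\|u_n\|_{s_1,p}^p+\kappa_2\|u_n\|_{s_2,q}^q\le c+o(1).
\]
Assume for contradiction that $\|u_n\|\ge 1$ along a subsequence. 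Since $\|u_n\|=\|u_n\|_{s_1,p}+\|u_n\|_{s_2,q}$, one of the two summands is $\ge 1/2$; raising to the $p$th or $q$th power yields $c\ge\min\{\kappa_1/2^p,\kappa_2/2^q\}$. Taking $C_\delta$ so large that $\Psi(C_\delta)<\min\{\kappa_1/2^p,\kappa_2/2^q\}$ contradicts the Step~1 bound $c\le\Psi(C_\delta)$, so $\sup_n\|u_n\|<1$ as required.

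The main delicacy is the quantitative analysis of $\max h(t)$. While it is intuitively clear that this maximum vanishes as $C_\delta\to\infty$, the simultaneous presence of the two positive terms $t^pA$ and $t^qB$ with distinct exponents obstructs a clean closed-form solution of $h'(t)=0$; the crude bound $t^{p-q}\le 1$ for $t\le 1$ is what decouples these two contributions and yields the explicit decay rate $\Psi(C_\delta)=O\bigl(C_\delta^{-q/(\delta-q)}\bigr)$, which is precisely what is needed to beat the constant $\min\{\kappa_1/2^p,\kappa_2/2^q\}$ extracted from the Cerami--(AR) identity.
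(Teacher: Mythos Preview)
Your proof is correct and follows essentially the same two-step strategy as the paper: bound the mountain-pass level $c$ from above via a compactly supported test function and (f3)$'$, then use the Cerami property together with (AR) to force $\|u_n\|<1$ once $c$ is small. The only cosmetic differences are that the paper estimates $\max_{t\ge 0}h(t)$ by splitting $h$ into two single-power pieces $\tfrac{t^r}{r}\|\psi\|^r-\tfrac12 C(\delta)t^\delta\|\psi\|^\delta$ (each with an explicit maximum), whereas you analyze the critical point of $h$ directly, and in the final step the paper bounds $\|u_n\|_{s_1,p}$ and $\|u_n\|_{s_2,q}$ separately by $(c\,p\nu/(\nu-p))^{1/\gamma}<\tfrac12$ rather than arguing by contradiction with the minimum of $\kappa_1/2^p,\kappa_2/2^q$.
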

\begin{proof}
	Fix $\psi\in C_c^\infty(\mb R^N)$ with $\| \psi \|>0$. For $\de>p$, set $K_0 := \ds\inf_{supp(\psi)} K>0$ and $S_\de =\frac{\| \psi \|}{\| \psi\|_\de}>0$. Now, using (f3)$'$, for $l>1$, we have 
	\begin{align*}
	\mc J(l\psi) = \frac{l^p}{p} \|\psi\|_{s_1,p}^p+ \frac{l^q}{q} \|\psi\|_{s_2,q}^q-  \int_{\mb R^N} \frac{K(x) F(l\psi(x))}{|x|^\ba}  \le \frac{l^p}{q} \|\psi\|^p- K_0 C_\de l^\de S_\de^{-\de} \| \psi \|^\de.
	\end{align*}
	Since $\de>p$, there exits $l_\de>0$ sufficiently large such that $\mc J(l_\de \psi)<0$. Therefore, 
	\begin{align}\label{eq39}
	c= \ds\inf_{\ga\in\Ga} \max_{t\in[0,1]} \mc J(\ga(t)) \le \max_{t\in[0,1]} \mc J(t l_\de \psi)\le \sup_{t\in\mb R^+} \mc J(t\psi). 
	\end{align}
	Consider $h: [0,\infty)\to \mb R$ defined by $h(t):= \frac{t^p}{p} \|\psi\|^p+ \frac{t^q}{q} \|\psi\|^q- C(\de) t^\de \| \psi \|^\de$, where $C(\de)=K_0 C_\de  S_\de^{-\de}$. Then, an easy computation yields
	\begin{align*}
	\sup_{t\ge 0} h(t) &\le \sup_{t\ge 0} \left(\frac{t^p}{p} \|\psi\|^p- \frac{1}{2}C(\de) t^\de \| \psi \|^\de \right) +\sup_{t\ge 0} \left( \frac{t^q}{q} \|\psi\|^q- \frac{1}{2}C(\de) t^\de \| \psi \|^\de\right) \\
	&= \left(\frac{2}{C(\de)}\right)^\frac{p}{\de-p} \left(\frac{1}{p}-\frac{1}{\de}\right) \big( \|\psi\|^{p-\de} \big)^\frac{\de}{\de-p} + \left(\frac{2}{C(\de)}\right)^\frac{q}{\de-q} \left(\frac{1}{q}-\frac{1}{\de}\right) \big( \|\psi\|^{q-\de} \big)^\frac{\de}{\de-q} \\
	&\le \left(\frac{1}{q}-\frac{1}{\de}\right) \frac{2^{p/(\de-p)}}{\big(K_0 S_\de^{-\de}\big)^\ga} \frac{\|\psi\|^{-\de}}{C_\de^{\de/(\de-q)}},
	\end{align*}
	where we assumed $C_\de>1$ and $\big(K_0 S_\de^{-\de}\big)^\ga= \min\{ \big(K_0 S_\de^{-\de}\big)^p, \big(K_0 S_\de^{-\de}\big)^q  \}$.
	Therefore, from \eqref{eq39}, we observe that 
	\begin{align}\label{eq40}
	c\le \sup_{t\ge 0} \mc J(t\psi) \le \sup_{t\ge 0} h(t) \le \left(\frac{1}{q}-\frac{1}{\de}\right) \frac{2^{p/(\de-p)}}{\big(K_0 S_\de^{-\de}\big)^\ga} \frac{\|\psi\|^{-\de}}{C_\de^{\de/(\de-q)}}.
	\end{align}
	By (AR) and the fact that $\{u_n\}$ is a Cerami sequence, we get
	\begin{align*}
	c=\lim_{n\ra\infty} \Big(\mc J(u_n)-\frac{1}{\nu} \mc J^\prime(u_n)u_n \Big) &\ge \limsup_{n\ra \infty} \Big( \big(\frac{1}{p}-\frac{1}{\nu}\big) \|u_n\|_{s_1,p}^p + \big(\frac{1}{q}-\frac{1}{\nu}\big) \|u_n\|_{s_2,q}^q\Big)  \\
	&\ge \limsup_{n\ra \infty} \big(\frac{1}{p}-\frac{1}{\nu}\big) \|u_n\|_{s,\ga}^\ga,
	\end{align*}
	where $(s,\ga)\in\{ (s_1,p),(s_2,q)\}$. Then, using \eqref{eq40}, we obtain 
	\begin{align*}
	\limsup_{n\ra \infty} \|u_n\|_{s,\ga}^\ga\le \frac{p\nu}{\nu-p}c \le  \frac{p\nu}{\nu-p} \left(\frac{1}{q}-\frac{1}{\de}\right) \frac{2^{p/(\de-p)}}{\big(K_0 S_\de^{-\de}\big)^\ga} \frac{\|\psi\|^{-\de}}{C_\de^{\de/(\de-q)}}<\frac{1}{2^\ga},
	\end{align*}
	provided $C_\de$ is sufficiently large. Then, the proof of the lemma follows by using the definition of $\| u_n \|$.\QED
\end{proof}
In the subcritical case, we prove the boundedness of Cerami sequences. The proof differs from the critical case due to absence of Ambrosetti-Rabinowitz type condition for this case.
\begin{Lemma}
	Suppose that (f1)-(f3) hold. Then, any Cerami sequence of $\mc J$ at the mountain pass level $c$ is bounded. 
\end{Lemma}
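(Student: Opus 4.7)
\noi\textbf{Proof plan.} Arguing by contradiction, suppose $\|u_n\| \to \infty$ along a subsequence. The Cerami condition gives $\ld \mc J^\prime(u_n), u_n \rd = o(1)$, and a direct computation yields
\begin{align*}
c + o(1) = \mc J(u_n) - \frac{1}{p}\ld \mc J^\prime(u_n), u_n \rd = \Big(\frac{1}{q} - \frac{1}{p}\Big)\|u_n\|_{s_2,q}^q + \int_{\mb R^N} \frac{K(x)}{|x|^\ba}\Big(\frac{1}{p}f(u_n)u_n - F(u_n)\Big)dx.
\end{align*}
The key structural observation is that (f3) forces $h(t):= \tfrac{1}{p}f(t)t - F(t)$ to be nonnegative on $\mb R$, identically zero on $(-\infty,0]$ (by (f1)) and nondecreasing on $[0,\infty)$. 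Indeed, writing $f(t) = t^{p-1}g(t)$ with $g$ nondecreasing, the bound $\int_{t_1}^{t_2}\tau^{p-1}g(\tau)d\tau \le g(t_2)(t_2^p - t_1^p)/p$ yields $h(t_2) - h(t_1) \ge t_1^p(g(t_2)-g(t_1))/p \ge 0$ for $0 < t_1 < t_2$. Both terms on the right being nonnegative, we obtain boundedness of $\|u_n\|_{s_2,q}$ and, simultaneously, of $\int K(x)|x|^{-\ba}h(u_n)\,dx$.

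Since $\|u_n\|_{s_2,q}$ is bounded while $\|u_n\| \to \infty$, necessarily $\|u_n\|_{s_1,p} \to \infty$. Setting $v_n := u_n/\|u_n\|_{s_1,p}$, one has $\|v_n\|_{s_1,p} = 1$ and $\|v_n\|_{s_2,q} \to 0$; as $V \ge V_0 > 0$, the latter gives $v_n \to 0$ in $L^q(\mb R^N)$ and therefore a.e. along a subsequence, so the bounded sequence $\{v_n\}$ in $X$ satisfies $v_n \rightharpoonup 0$ in $X$. I would then apply Lemma \ref{cmp} to the bounded sequence $\{Rv_n\}$ for each fixed $R > 0$ to obtain $\int K(x)|x|^{-\ba}F(Rv_n)dx \to 0$, from which
\begin{align*}
\mc J(Rv_n) = \frac{R^p}{p}\|v_n\|_{s_1,p}^p + \frac{R^q}{q}\|v_n\|_{s_2,q}^q - \int_{\mb R^N}\frac{K(x)F(Rv_n)}{|x|^\ba}dx \longrightarrow \frac{R^p}{p}.
\end{align*}

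For $n$ large $R/\|u_n\|_{s_1,p} \in (0,1)$, so $Rv_n = t_n u_n$ with $t_n \in (0,1)$; consequently $\max_{t \in [0,1]} \mc J(tu_n) \ge \mc J(Rv_n)$, and since $R$ is arbitrary, $\max_{t \in [0,1]} \mc J(tu_n) \to \infty$. Let $s_n \in [0,1]$ realize the maximum; since $\mc J(0) = 0$ and $\mc J(u_n) \to c$ is bounded, the maximum must occur at an interior point $s_n \in (0,1)$ for large $n$, giving the critical equation $\ld \mc J^\prime(s_n u_n), s_n u_n \rd = 0$. The main step is now to exploit the monotonicity of $h$ pointwise: on $\{u_n > 0\}$ we have $0 < s_n u_n \le u_n$ hence $h(s_n u_n) \le h(u_n)$, while on $\{u_n \le 0\}$ both sides vanish, so
\begin{align*}
\mc J(s_n u_n) &= \mc J(s_n u_n) - \frac{1}{p}\ld \mc J^\prime(s_n u_n), s_n u_n \rd = \Big(\frac{1}{q}-\frac{1}{p}\Big)s_n^q\|u_n\|_{s_2,q}^q + \int_{\mb R^N}\frac{K(x)h(s_n u_n)}{|x|^\ba}dx \\
&\le \Big(\frac{1}{q}-\frac{1}{p}\Big)\|u_n\|_{s_2,q}^q + \int_{\mb R^N}\frac{K(x)h(u_n)}{|x|^\ba}dx,
\end{align*}
which is bounded by the first step. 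This contradicts $\mc J(s_n u_n) \to \infty$ and closes the argument. The principal obstacle is precisely this replacement of the Ambrosetti--Rabinowitz condition: one cannot directly control $\int K(x)|x|^{-\ba}f(u_n)u_n$ from (f3) alone, so the pointwise monotonicity of $h$ combined with the maximum-truncation device is exactly what is needed to transfer the boundedness from $u_n$ to the rescaled critical points $s_n u_n$.
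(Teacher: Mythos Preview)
Your proof is correct and follows essentially the same approach as the paper's: the Jeanjean-type maximum truncation $\max_{t\in[0,1]}\mc J(tu_n)$ combined with the monotonicity of $H(t)=tf(t)-pF(t)$ (your $ph$) to bound the maximum from above, and the compactness Lemma~\ref{cmp} applied to the normalized sequence to drive it to $+\infty$ from below. The only minor variation is that you normalize by $\|u_n\|_{s_1,p}$ rather than the full norm $\|u_n\|$ and identify the weak limit as zero via the already-obtained bound on $\|u_n\|_{s_2,q}$ (giving $v_n\to 0$ in $L^q$) instead of invoking the superlinearity $\lim_{t\to\infty}F(t)/t^p=\infty$ and Fatou's lemma as the paper does---a slightly more economical step, but not a different strategy.
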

\begin{proof}
	Let $\{v_n\}\subset X$ be a Cerami sequence of $\mc J$ at level $c$. Then, as in the proof of lemma \ref{lem2}, there exists $t_n\in[0,1]$ such that 
	\begin{align}\label{eq41}
	\mc J(t_nv_n)=\max_{t\in[0,1]} \mc J(tv_n). 
	\end{align}
	We claim that the sequence $\{\mc J(t_nv_n) \}$ is bounded.
	The claim is obvious if $t_n=0$ or $1$, therefore we assume $t_n\in(0,1)$. Also, we assume $v_n\ge 0$. Setting 
	\[ H(t):= tf(t)-pF(t)  \; \mbox{ for }t\in\mb R,\]
	and since $t^{1-p} f(t)$ is nondecreasing and differentiable (due to (f1) and (f3)), we get that $H$ is nondecreasing in $\mb R$. Now, from \eqref{eq41}, we have 
	\[ \frac{d}{dt} \mc J(tv_n)\Big|_{t=t_n}=0, \]
	and hence
	\begin{align*}
	p\mc J(t_nv_n) &= 
	\Big(\frac{p}{q}-1\Big) t_n^q \|v_n\|_{s_2,q}^q+ \int_{\mb R^N} \frac{K(x)H(t_nv_n)}{|x|^\ba} \\
	&\le 
	\Big(\frac{p}{q}-1\Big)  \|v_n\|_{s_2,q}^q+ \int_{\mb R^N} \frac{K(x)H(v_n)}{|x|^\ba} \\
	&= p\mc J(v_n)- \mc J^\prime(v_n)v_n = pc+o_n(1),
	\end{align*}
	this proves the claim. To prove the lemma, on the contrary, we assume that up to a subsequence $\|v_n\| \ra \infty$ as $n\ra\infty$ and $\|v_n\| \ge 1$ for all $n\in\mb N$. Then, there exists $w\in X$ such that $w_n\rightharpoonup w$ weakly in $X$, where $w_n= \frac{v_n}{\|v_n\|}$. We claim that $w=0$ a.e. in $\mb R^N$. Indeed, since $\mc J(v_n)=c+o_n(1)$ and $\|v_n\| \ra \infty$, we get 
	\begin{align}\label{eq42}
	\frac{1}{p} \frac{\|v_n\|_{s_1,p}^p}{\|v_n\|^p} + \frac{1}{q} \frac{\|v_n\|_{s_2,q}^q}{\|v_n\|^p} -\int_{\mb R^N} \frac{K(x)F(v_n)}{\|v_n\|^p |x|^\ba}=o_n(1).
	\end{align}
	Since $\lim_{t\ra\infty} t^{-p}F(t)=\infty$, for every $\tau>0$, there exists $\xi>0$ such that 
	\[ F(t)\ge \tau|t|^p \; \mbox{ for all }|t|\ge \xi.  \]
	Therefore, from \eqref{eq42} and noting that $q<p$, we obtain 
	\begin{align*}
	o_n(1)+\frac{1}{p} \ge \int_{|v_n|\ge \xi} \frac{K(x)F(v_n)w_n(x)^p}{|v_n(x)|^p |x|^\ba} \ge \tau \int_{\mb R^N} \frac{K(x)w_n(x)^p}{ |x|^\ba}\chi_{\{|v_n|\ge \frac{\xi}{\|v_n\|} \} }.
	\end{align*}
	By Fatou's lemma, for all $\tau>0$, we deduce that
	\[  \tau\int_{\mb R^N} \frac{K(x)w(x)^p}{ |x|^\ba}\le \frac{1}{p}, \]
	which implies $w=0$ a.e. in $\mb R^N$. Let $T>0$, then there exists $n_T\in\mb N$ such that for all $n\ge n_T$, $T\|v_n\|^{-1}\in (0,1)$. Now, from \eqref{eq41} and the fact that $\| w_n\|\le 1$ (follows from $w=0$), we get
	\begin{align*}
	\mc J(t_nv_n)\ge \mc J(Tw_n) &= \frac{T^p}{p} \|w_n\|_{s_1,p}^p + \frac{T^q}{q} \|w_n\|_{s_2,q}^q -\int_{\mb R^N} \frac{K(x)F(Tw_n)}{|x|^\ba} \\
	&\ge \frac{2^{1-p}T^\ga}{p} \|w_n\|^p -\int_{\mb R^N} \frac{K(x)F(Tw_n)}{|x|^\ba},
	\end{align*}
	where $T^\ga= \min\{ T^p,T^q\}$. Then, by the compactness lemma \ref{cmp}, we have 
	\begin{align*}
	\int_{\mb R^N} \frac{K(x)F(Tw_n)}{|x|^\ba}\ra 0, \; \mbox{ as }n\ra \infty.
	\end{align*}
	Thus,
	\begin{align*}
	\liminf_{n\ra\infty} \mc J(t_n v_n)\ge \frac{2^{1-p}T^\ga}{p},
	\end{align*}
	which is a contradiction, if we choose $T$ such that $ T= \big(2^p p \sup_n \{ \mc J(t_nv_n)\}\big)^{1/\ga}$.
	This proves the lemma. \QED
\end{proof}
\section{Proof of Main Theorem}
 The functional $\mc J$ satisfies mountain pass geometry in both the cases. Therefore, there exist Cerami sequences $\{u_n\}\subset X$ and $\{v_n\}\subset X$ in the subcritical and critical cases, respectively. Furthermore, $\{u_n\}$ and $\{v_n\}$ are bounded in $X$. Therefore, up to a subsequence $u_n\rightharpoonup u$ and $v_n\rightharpoonup v$ weakly in $X$, for some $u,v\in X$.
 
\subsection{The subcritical case}   By the compactness lemma \ref{cmp}, we see that $\int_{\mb R^N} \frac{K(x)f(u_n)}{|x|^\ba}(u_n-u)\ra 0$ as $n\ra\infty$.  Moreover, since $\langle \mc J^\prime(u_n), u_n-u \rangle \ra 0$ as $n\ra \infty$, it follows that
\begin{align*}
\mc A_1(u_n,u_n-u)+\mc A_2(u_n,u_n-u)+\int_{\mb R^N} V(x)\big(|u_n|^{p-2}u_n +|u_n|^{q-2}u_n\big)(u_n-u)=o_n(1).
\end{align*}
On the other hand for fixed $u\in X$, it is easy to observe that $\varTheta_{u,p}+\varTheta_{u,q}\in X^\prime$, where $\varTheta_{u,p}(v):= \mc A_1(u,v)+\int_{\mb R^N} V(x)|u|^{p-2}uvdx$ for all $v\in X$ and $\varTheta_{u,q}$ is analogously defined.  Therefore, using the fact that $u_n\rightharpoonup u$ weakly in $X$, we get
\begin{align*}
\mc A_1(u,u_n-u)+ \mc A_2(u,u_n-u)+\int_{\mb R^N} V(x)\big(|u|^{p-2}u +|u|^{q-2}u\big)(u_n-u)=o_n(1).
\end{align*}
Coupling these, we obtain
\begin{align}\label{eq9}
&\mc A_1(u_n,u_n-u)-\mc A_1(u,u_n-u)+\int_{\mb R^N} V(x)\big(|u_n|^{p-2}u_n -|u|^{p-2}u\big)(u_n-u) \nonumber \\
&\;+\mc A_2(u_n,u_n-u)- \mc A_2(u,u_n-u) +\int_{\mb R^N} V(x)\big(|u_n|^{q-2}u_n -|u|^{q-2}u\big)(u_n-u)=o_n(1).
\end{align}
Now, we consider the cases when $q\ge 2$ and $1<q<2$ (note that $p\ge 2$).\\
\textit{Case (i)}: $q\ge 2$.\\
Using the inequality  $|a-b|^{l} \leq 2^{l-2}(|a|^{l-2}a-|b|^{l-2}b)(a-b) \;\text{for}\; a, b \in \mathbb{R}^{n}\text{ and } l \geq 2,$ from \eqref{eq9}, it follows that
\begin{align*}
  [u_n-u]_{s_1,p}^p+\int_{\mb R^N} V(x)|u_n-u|^p + [u_n-u]_{s_2,q}^q+\int_{\mb R^N} V(x)|u_n-u|^q\le o_n(1),
\end{align*}
that is
\[ \|u_n-u\|_{s_1,p}^p+\|u_n-u\|_{s_2,q}^q =o_n(1) \]
this implies that $u_n\ra u$ in $X$.\\
\textit{Case (ii)}: $1<q<2$.\\
As we know that
for $a,\ b\in\mb{R}^n$ and $1<m<2$, there exists $C_m >0$  a constant such that
$$|a-b|^m\le C_m\big((|a|^{m-2}a-|b|^{m-2}b)(a-b)\big)^{\frac{m}{2}}(|a|^m+|b|^m)^{\frac{2-m}{2}}.$$
Set $a=u_k(x)-u_k(y)$, $b=u_\la(x)-u_\la(y)$ and then using H\"older inequality, we deduce that
\begin{align*}
[u_n-u]_{s_2,q}^q
&\le C \big(\mc A_2(u_n, u_n-u) - \mc A_2(u, u_n-u) \big)^\frac{q}{2}  \big( [u_n]_{s_2,q}^q + [u]_{s_2,q}^q \big)^\frac{2-q}{2}.
\end{align*}
and boundedness of $\{u_n\}$ in $X$, implies
\[ [u_n-u]_{s_2,q}^2\le C \big(\mc A_2(u_n, u_n-u) - \mc A_2(u, u_n-u) \big). \]
Therefore, using \eqref{eq9} and proceeding similarly as in the previous case, we obtain
$u_n\ra u$ in $\widetilde{W}^{s_1,p}_V(\mb R^N)$ as well as in $\widetilde{W}^{s_2,q}_V(\mb R^N)$, which gives us the required strong convergence of $u_n$ to $u$ in $X$. Using the fact that $c>0$ and strong convergence, we get that $u\not\equiv 0$. By Lemma \ref{non}, $u$ is a nontrivial nonnegative solution of $(\mc P)$. 

\subsection{The critical case}  We observe that if we choose $C_\de>0$ such that Lemma \ref{lem2} is satisfied, then the compactness results of corollary \ref{cor1} hold. Now, we can proceed similarly to prove that $v_n\ra v$ in $X$ and $v\not\equiv 0$, hence $v$ is a nontrivial weak solution of $(\mc P)$. \QED

\section*{Acknowledgments}
D. Kumar is thankful to Council of Scientific and Industrial Research (CSIR) for the financial support. K. Sreenadh acknowledges the support through the Project:
MATRICS grant MTR/2019/000121 funded by SERB, India.

\end{document}